\newtheorem{theorem}{Theorem}[section]
\newtheorem{lemma}[theorem]{Lemma}
\theoremstyle{definition}
\newtheorem{definition}[theorem]{Definition}
\newtheorem{example}[theorem]{Example}
\theoremstyle{remark}
\numberwithin{equation}{section}
\begin{document}

\title[SGL submanifolds in an indefinite Sasakian statistical manifold]{The geometric characteristics of SGL submanifolds in an indefinite Sasakian statistical manifold equipped with a quarter symmetric metric connection} 

\author{Vandana Gupta}
\address{Department of Mathematics, Patel Memorial National College, Rajpura,  Patiala, India}

\email{vgupta.87@rediffmail.com}

\author{Shagun}
\address{Department of Mathematics, Punjabi University, Patiala, India}
\author{Jasleen Kaur }
\address{Department of Mathematics, Punjabi University, Patiala, India}
\email{jasleen\_math@pbi.ac.in }
\keywords{}

\begin{abstract}

This research paper examines the geometric structure of screen generic lightlike (SGL) submanifolds in an indefinite Sasakian statistical manifold equipped with a quarter-symmetric (QS) metric connection. The study focuses on analyzing the integrability conditions and the parallelism properties of various distributions associated with these submanifolds. It explores the characteristics of totally geodesic foliations  and mixed geodesic submanifolds, providing significant insights into their geometric behavior. In addition to the theoretical development, the paper also presents an illustrative example of a contact SGL submanifold within an indefinite Sasakian statistical manifold.
\end{abstract}

\maketitle

\section{\textbf{INTRODUCTION}}

 Duggal and Bejancu \cite{duggal1996lightlike} introduced contact 
$CR$ and $SCR$ lightlike submanifolds in the setting of indefinite Sasakian manifolds, which laid the groundwork for further studies. Subsequent research extended the analysis of these submanifolds within various types of indefinite contact metric manifolds \cite{sasakianduggal}, \cite{A}. The notion of generic lightlike submanifolds, which generalizes the geometry of half-lightlike submanifolds of codimension 2, was introduced in \cite{duggalgeneric}, \cite{B} for indefinite Sasakian manifolds. To further extend this concept, the idea of screen generic lightlike submanifolds (henceforth referred to as “SGL submanifolds” throughout this paper) was introduced in \cite{D}. This novel class of submanifolds serves as a broader framework that includes both SCR-lightlike submanifolds and generic lightlike submanifolds within indefinite Sasakian geometry. \cite{D} focused on various structural properties of SGL submanifolds, providing deeper insights into their geometric characteristics.

 Analyzing geometric structures within sets of certain probability distributions results in the development of statistical manifolds, whose study was pioneered by \cite{E} and followed by significant contributions from many researchers \cite{F}, \cite{G}, \cite{H} et al. Thereafter, the lightlike theory of statistical manifolds has been explored by \cite{K}, \cite{bahadir2019geometry} and many others. Further, several observations have been presented related to the lightlike theory of various contact metric statistical manifolds by amalgamating statistical structure with an indefinite contact metric structure in \cite{I}, \cite{L} et.al.\\
 
Golab \cite{golab1975} introduced a quarter-symmetric connection in a differentiable manifold. Hereafter, quarter-symmetric metric connection will be abbreviated as "QS" metric connection. 
 ``A linear connection $\tilde{D}$ on a semi-Riemannian manifold $(\tilde{ N}, \tilde{\rho})$ is said to be a quarter symmetric connection if its torsion tensor $\tilde{T}$ of the connection $\tilde{D}$ satisfies 
\[
\tilde T(X,Y) = \tilde{D}_XY -\tilde{D}_YX -[X,Y]
\]
\begin{equation}\label{eq1.1}
	\tilde T(X,Y) = \eta(Y)\phi(X) - \eta(X)\phi(Y),
\end{equation}
where $\phi$ is a (1,1)-tensor field and $\eta$ is a 1-form defined by $\eta(X) =\tilde \rho(X,\nu) $. If the linear connection $\tilde D$  satisfies $\tilde D \rho = 0$, it is called a QS metric connection." Subsequently, extensive research has been conducted on lightlike submanifolds within various semi-Riemannian manifolds, including indefinite trans-Sasakian manifolds and indefinite Kähler manifolds, in the context of a QS metric connection(refer to\cite{M} and \cite{N}).

Building on this theoretical foundation, the present study aims to develop the framework of SGL submanifolds within an indefinite Sasakian statistical manifold endowed with a QS metric connection. The paper examines the conditions of parallelism  and integrability  of different distributions. Furthermore, significant results concerning totally geodesic foliations and mixed geodesic submanifolds are established.

\vspace{.2in}
\section{Preliminaries}
The study of lightlike submanifolds in semi-Riemannian geometry necessitates a thorough understanding of several fundamental concepts that define their geometric characteristics. This section explores some such essential ideas.
\begin{definition}\label{def21}
`` A pair $(\bar{\nabla}, \tilde{\rho})$ is called a \textbf{statistical structure} on a semi-Riemannian manifold $\tilde{N}$ such that for all $X, Y, Z \in \Gamma(T\tilde{N})$
 \begin{enumerate}
 	\item $\bar{\nabla}_{X}Y-\bar{\nabla}_{Y}X = [X,Y]$;
 	\item $(\bar{\nabla}_{X} \tilde{\rho})(Y, Z) = (\bar{\nabla}_{Y}\tilde{\rho})(X, Z)$ hold.
 \end{enumerate}   
 Then $(\tilde{N}, \tilde{\rho}, \bar{\nabla})$ is said to be an \textbf{indefinite statistical manifold}. Moreover, there exists $\bar{\nabla}^{*}$ which is a dual connection of $\bar{\nabla}$ with respect to $\tilde{\rho}$, satisfying
 \begin{equation}\label{eq2.1}
 X\tilde{\rho}(Y, Z) = \tilde{\rho}(\bar{\nabla}_{X}Y, Z) + \tilde{\rho}(Y, \bar{\nabla}^{*}_{X}Z).
 \end{equation}
 
 Also $(\bar{\nabla}^*)^* = \bar{\nabla}$. If $(\tilde{N}, \tilde{\rho}, \bar{\nabla})$ is an indefinite statistical manifold, then   $(\tilde{N}, \tilde{\rho}, \bar{\nabla}^{*})$ is also a statistical manifold. Hence, the indefinite statistical manifold is denoted by $(\tilde{N}, \tilde{\rho}, \bar{\nabla}, \bar{\nabla}^{*})$."\\
 \end{definition}
 
  Consider ($ \tilde{N},\tilde \rho$) as an $(m+n)$-dimensional semi-Riemannian manifold with semi-Riemannian metric $\tilde{\rho}$ and of constant index $q$ such that $m,n\geq 1$, $1\leq q\leq m+n-1$.\\
`` Let $(N,\rho)$ be a $m$-dimensional lightlike submanifold of $\tilde{N}$. In this case, there exists a smooth distribution $Rad(TN)$ on $N$ of rank $r>0$, known as Radical distribution on $N$ such that $Rad (TN_p) = TN_p \cap TN_p^{\perp}, \forall ~p \in N$ where $TN_p$ and  $TN_p^{\perp}$ are degenerate orthogonal spaces but not complementary. Then $N$ is called an $r$-lightlike submanifold of $\tilde{N}$.  Now, consider $S(TN)$, known as screen distribution, as  a complementary distribution of radical distribution in  $TN$  i.e.,
$ TN = Rad (TN)  \perp S(TN)$ and  $S(TN^{\perp})$, called screen transversal vector bundle, as a complementary vector subbundle to $Rad(TN)$ in $TN^{\perp}$ i.e.,
 $ TN^{\perp} = Rad(TN)  \perp S(TN^{\perp})$. As $S(TN)$ is non degenerate vector subbundle of $T\tilde{N}{\mid}_N$, we have
$ T\tilde{N}{\mid}_N = S(TN) \perp S(TN)^{\perp}$
 where $S(TN)^{\perp}$  is the complementary orthogonal vector subbundle of $S(TN)$ in $T\tilde{N}{\mid}_N$. Let $tr(TN)$ and $ltr(TN)$ be complementary vector bundles to $TN$ in 
 $T\tilde{N}{\mid}_N$ and to $Rad(TN)$ in $S(TN^{\perp})^{\perp}$. Then we have $
 tr(TN) = ltr(TN) \perp S(TN^{\perp})$, $ T\tilde{N}{\mid}_N = TN \oplus tr(TN)= (Rad(TN) \oplus ltr(TN)) \perp S(TN) \perp S(TN^{\perp})$.
 
 \begin{theorem}\cite{duggal1996lightlike}
 Let $(N,\rho,S(TN), S(TN^{\perp}))$ be an $r$- lightlike submanifold of a semi-Riemannian manifold $(\tilde{N},\tilde{\rho})$.Then there exists a complementary vector bundle $ltr(TN)$ called a lightlike transversal bundle of $Rad(TN)$ in $S(TN^{\perp})^{\perp}$ and basis of $\Gamma(ltr(TN){\mid}_U)$ consisting of smooth sections $\{N^{\prime}_1,\cdots,N^{\prime}_r\}$ $S(TM^{\perp})^{\perp}{\mid}_U$ such that 
 \[
 \bar{g}(N^{\prime}_i,\xi_j) = \delta_{ij} , \quad \bar{g}(N^{\prime}_i,N^{\prime}_j) = 0, \quad i,j=0,1,\cdots , r
 \]
 where  $\{{\xi_1, \cdots , \xi_r}\}$ is a lightlike basis of $\Gamma(RadTM){\mid}_U$."
 \end{theorem}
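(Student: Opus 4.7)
The plan is to build $ltr(TN)$ locally on a trivialising chart $U$, show the resulting null sections satisfy the required duality, and then observe that the construction patches to a smooth bundle by a standard argument.

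First, I would isolate the relevant ambient non-degenerate subbundle. Define
\[
H := (S(TN)\perp S(TN^{\perp}))^{\perp} \subset T\tilde N\big|_N.
\]
Since $S(TN)$ and $S(TN^{\perp})$ are non-degenerate of ranks $m-r$ and $n-r$, $H$ is non-degenerate of rank $2r$. Moreover $Rad(TN)\subset H$: it is orthogonal to $S(TN)$ by the screen decomposition of $TN$, and orthogonal to $S(TN^{\perp})$ because $Rad(TN)\subset TN^{\perp}$ while $S(TN^{\perp})$ lies in a complement of $Rad(TN)$ in $TN^{\perp}$. Thus $Rad(TN)$ is a totally null rank-$r$ subbundle of the non-degenerate rank-$2r$ bundle $H$, i.e. it is \emph{maximally} null in $H$.

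Next I would produce approximate duals. On $U$, extend $\{\xi_1,\dots,\xi_r\}$ to a local frame $\{\xi_1,\dots,\xi_r,W_1,\dots,W_r\}$ of $H|_U$ and form the pairing matrix $A_{ij}:=\tilde\rho(W_i,\xi_j)$. I claim $A$ is invertible: otherwise some combination $W=\sum c_i W_i$ would pair trivially with every $\xi_j$, hence $W$ would be orthogonal to $Rad(TN)$ inside $H$; maximal-nullity of $Rad(TN)$ in the neutral bundle $H$ then forces $W\in Rad(TN)$, contradicting the choice of the $W_i$'s as a complement. Replacing $W_i$ by $\sum_{k}(A^{-1})_{ki}W_k$ I may assume $\tilde\rho(W_i,\xi_j)=\delta_{ij}$. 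To eliminate the self-pairings, set
\[
N'_i := W_i - \tfrac12\sum_{j}\tilde\rho(W_i,W_j)\,\xi_j.
\]
Because the $\xi_j$'s are mutually null, a direct expansion (using that $\tilde\rho(W_i,W_j)$ is symmetric in $i,j$) yields $\tilde\rho(N'_i,\xi_j)=\delta_{ij}$ and $\tilde\rho(N'_i,N'_j)=0$. The $N'_i$'s are smooth since the construction is algebraic in smooth local sections, and they span a rank-$r$ null subbundle of $S(TN^{\perp})^{\perp}$ complementary to $Rad(TN)$, which is the desired $ltr(TN)|_U$.

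The main obstacle is the invertibility of $A$ in the middle step: this is the only non-formal input and relies essentially on the signature $(r,r)$ of $H$ together with the maximal-nullity of $Rad(TN)$ in $H$. Everything else is either a definition chase or the symmetric hyperbolic normalisation $W_i\mapsto N'_i$. Globalisation from $U$ to $N$ is routine, since two local solutions differ by a section of an affine bundle modelled on $\mathrm{Hom}(Rad(TN),Rad(TN))\oplus \wedge^{2}Rad(TN)^{*}$, and such sections patch via a partition of unity.
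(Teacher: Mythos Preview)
The paper does not prove this theorem; it is quoted from Duggal--Bejancu as a background result, with no argument supplied. Your construction is correct and is precisely the classical one from that reference: work inside the non-degenerate rank-$2r$ bundle $H$, use maximal nullity of $Rad(TN)$ in $H$ to force the pairing matrix $A_{ij}=\tilde\rho(W_i,\xi_j)$ to be invertible, normalise so that $A=\mathrm{Id}$, and then shear by $-\tfrac12\sum_j\tilde\rho(W_i,W_j)\xi_j$ to kill the self-pairings.

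One small correction on the globalisation step. Once the duality $\tilde\rho(N'_i,\xi_j)=\delta_{ij}$ is imposed, two local null frames differ by $N''_i-N'_i=\sum_jA_{ij}\xi_j$ with $A$ \emph{skew-symmetric} (the symmetric part is forced to vanish by the nullity of both frames), so the affine bundle of solutions is modelled on $\wedge^2 Rad(TN)^*$ alone, not on $\mathrm{Hom}(Rad(TN),Rad(TN))\oplus\wedge^{2}Rad(TN)^{*}$ as you wrote. This does not affect the validity of the partition-of-unity patching, since convex combinations of skew matrices are still skew.
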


 Let $(N,\rho)$ be a lightlike submanifold of  $(\tilde{N},\tilde{ \rho},\bar{\nabla},\bar{\nabla^*})$. Then the G-W formulae are as below:
  \begin{equation}\label{eq2.2}
  \bar\nabla_X Y = \nabla_X Y +h^l(X,Y) + h^s(X,Y) , \quad \bar\nabla^*_X Y = \nabla^*_X Y +h^{*l}(X,Y) +h^{*s}(X,Y),   
  \end{equation}
  \begin{equation}\label{eq2.3}
  \bar\nabla_X V = -A_V X + D_X^{l}V +D_X^{s}V , \quad \bar\nabla_X^* V = -A_V^* X + D_X^{*l}V +D_X^{*s}V ,
  \end{equation}
  \begin{equation}\label{eq2.4}
 	\bar\nabla_X N^{\prime} = -A_{N^{\prime}} X + \nabla^l_X N^{\prime} +D^s(X,N^{\prime}), \quad \bar\nabla_X^* N^{\prime} = -A^*_{N^{\prime}} X + \nabla^{*l}_X N^{\prime} +D^{*s}(X,N^{\prime}),
  \end{equation}
  \begin{equation}\label{eq2.5}
  	\bar\nabla_X W =-A_W X +\nabla^s_X W +D^l(X,W),\quad \bar\nabla_X^*W =-A_W^*X + \nabla^{*s}_XW +D^{*l}(X,W).
  \end{equation}
  for any  $X, Y \in \Gamma(TN)$, $V \in \Gamma(tr(TN))$, $N^{\prime} \in \Gamma(ltr(TN))$ and $W\in \Gamma(S(TN^{\perp}))$.\\
  
The notion of indefinite statistical manifold and (\ref{eq2.2}), (\ref{eq2.3}), (\ref{eq2.4}), (\ref{eq2.5}) lead to
  \begin{equation}\label{eq2.6}
 \tilde{\rho}(h^s(X,Y),W) + \tilde{ \rho}(Y, D^{*l}(X,W)) = \tilde{ \rho}(Y,A^*_WX),
 \end{equation}
 \[
 \tilde{\rho}(h^l(X,Y),\xi) + \tilde{\rho}(Y,\nabla^*_X\xi) + \tilde{ \rho}(Y,h^{*l}(X,\xi))=0,
 \]
 Within statistical manifold theory, non-degenerate submanifolds are known to preserve the statistical structure. Nevertheless, this assertion breaks down for lightlike submanifolds, as the definition of a statistical manifold and equation (\ref{eq2.2}) implies
  \[
  (\nabla_X\rho)(Y,Z) - (\nabla_Y\rho)(X,Z) = \tilde{\rho}(Y,h^l(X,Z)) - \tilde{\rho}(X,h^l(Y,Z)),
  \]
  and   
  \[
  X\rho(Y,Z)- \rho(\nabla_XY,Z) - \rho(Y, \nabla^*_XZ) = \tilde{ \rho}(h^l(X,Y),Z) + \tilde{\rho}(Y,h^{*l}(X,Z)).
  \]
  Considering the projection morphism $P$ of  $TN$ to $S(TN)$, we have 
  \begin{equation}\label{eq2.7}
  \nabla_X PY = \nabla_X^{\prime} PY + h^{\prime}(X,PY), \quad \nabla_X^* PY = \nabla_X^{*\prime} PY + h^{*\prime}(X,PY),
  \end{equation}
  \begin{equation}\label{eq2.8}
  \nabla_X \xi = - A^{\prime}_{\xi} X + \nabla_X^{\prime t} \xi, \quad \nabla_X^* \xi = - A^{*\prime}_{\xi} X + {\nabla}_X^{*\prime t} \xi,
  \end{equation}
  for any $X, Y \in \Gamma(TN)$,$\;$ $\xi \in \Gamma(Rad(TN))$. \\
  Using (\ref{eq2.2}),(\ref{eq2.3}),(\ref{eq2.6}) and (\ref{eq2.8}), we obtain
  \begin{equation}\label{eq2.9}
  \tilde \rho(h^l(X,PY),\xi) = \rho(A^{*\prime}_\xi X,PY), \quad \tilde \rho(h^{*l}(X,PY),\xi) = \rho(A^{\prime}_\xi X,PY),
  \end{equation}
  \begin{equation}\label{eq2.10}
  \tilde \rho(h^{\prime}(X,PY),N^{\prime}) = \rho(A^*_{N^{\prime}}X,PY), \quad \tilde \rho(h^{*\prime}(X,PY),N^{\prime}) = \rho(A_{N^{\prime}}X,PY),
  \end{equation}
  for any $X, Y \in \Gamma(TN)$, $\xi \in \Gamma(Rad(TN))$ and $N^{\prime} \in \Gamma(ltr(TN))$. As $h^l$ and $h^{*l}$ are symmetric, so from (\ref{eq2.9}), we obtain
  \[
 \rho(A^{\prime}_\xi PX,PY) =\rho( PX,A^{\prime}_\xi PY), \quad \rho(A^{*\prime}_\xi PX,PY) = \rho( PX,A^{*\prime}_\xi PY).
  \]

 `` Let $\bar{\nabla}^{\circ}$ be the Levi-Civita connection w.r.t $\tilde{ \rho}$. Then, we have $\bar{\nabla}^{\circ} = \frac{1}{2}(\bar{\nabla}+ \bar{\nabla}^*).$ \\
   For a statistical manifold $(\tilde{N},\tilde{\rho},\bar{\nabla},\bar{\nabla}^*)$, the difference $(1,2)$ tensor $K$ of a torsion free affine connection $\bar{\nabla}$ and Levi-Civita connection $\bar{\nabla}^{\circ}$ is defined as
   \begin{equation}\label{eq2.11}
   K(X,Y) = K_XY= \bar{\nabla}_X Y - \bar{\nabla}^{\circ}_X Y,
   \end{equation}
   Since $\bar{\nabla}$ and $\bar{\nabla}^{\circ}$ are torsion free, we have
   \begin{equation}\label{eq2.12}
   K(X,Y) = K(Y,X),\;\;\;\;\; \tilde \rho(K_XY,Z) = \tilde \rho(Y,K_XZ),
   \end{equation}
   for any $X, Y, Z \in \Gamma(TN)$."
    
     \begin{definition}\cite{sasakianduggal} ``An odd-dimensional semi-Riemannian manifold $(\tilde{N}, \tilde{\rho})$ is called contact metric manifold if there are a $(1,1)$ tensor field $\phi$, a vector field $\nu$ called characteristic vector field, and a 1-form $\eta$ such that
         	\begin{equation}\label{eq2.13}
         		\tilde{\rho}(\phi X, \phi Y) = \tilde{\rho}(X, Y) - \eta(X)\eta(Y), \qquad \tilde{\rho}(\nu, \nu) = 1,   
         	\end{equation}
         	\begin{equation}\label{eq2.14}
         		\phi^{2}(X) = - X + \eta(X)\nu, \quad \tilde{\rho}(X, \nu) = \eta(X),\quad \tilde{\rho}(\phi X, Y)  + \tilde{\rho}(X, \phi Y) = 0,
         	\end{equation}
         	\begin{equation}\label{eqeq2.15}
         		d\eta(X,Y) = \tilde{\rho}(X,\phi Y), \quad \forall \;X,Y \in \Gamma(T\bar{M}) .
         	\end{equation}
         	It follows that $\phi\nu = 0,\; \eta o\phi = 0,\; \eta(\nu) = 1$. Then $(\phi, \nu, \eta, \tilde{\rho})$ is called contact metric structure of $\tilde{N}$.\\
         	Also, $\tilde{N}$ has a normal contact structure if $N_{\phi} + d\eta \otimes \nu = 0$, where $N_{\phi}$ is the Nijenhuis tensor field."
         \end{definition}
         
         \noindent ``A normal contact metric manifold $\tilde{N}$ is called an indefinite Sasakian manifold if
         \begin{equation}\label{eq2.16}
         	{\bar{\nabla}^{\circ}}_{X}\nu = - \phi X, 
         \end{equation}
         \begin{equation}\label{eq2.17}
         	({\bar{\nabla}^{\circ}}_{X}\phi)Y = \tilde{\rho}(X,Y)\nu - \eta(Y)X.
         \end{equation}
         holds for any $X, Y \in \Gamma(T\tilde{N})$, where ${\bar{\nabla}^{\circ}}$ is Levi-Civita Connection. " 
\begin{definition}\cite{furuhata2017}\label{def2.4}
``	A quadruplet $(\bar{\nabla} = {\bar{\nabla}^{\circ}} + K, \tilde{\rho}, \phi, \nu)$ is called  Sasakian statistical structure on $\tilde{N}$ if\\ 
	$(i)\;(\tilde{\rho},\phi,\nu)$ is  Sasakian structure on $\tilde{N}$\\
	$(ii)\;(\bar{\nabla},\tilde{\rho})$ is a statistical structure on $\tilde{N}$\\
	and the condition
	\begin{equation}\label{eq2.18}
	K(X, \phi Y) + \phi K(X, Y) =0
	\end{equation}
	holds for any $X, Y \in \Gamma(T\tilde{N})$.\\
	Then $(\tilde{N}, \bar{\nabla}, \tilde{\rho},\phi, \nu)$ is called Sasakian statistical manifold. If $(\bar{N}, \bar{\nabla}, \tilde{\rho}, \phi, \nu)$ is  Sasakian statistical manifold, then so is $(\tilde{N}, \bar{\nabla}^{*}, \tilde{\rho}, \phi, \nu)$."
\end{definition}
\begin{theorem}\cite{furuhata2017}
``	Let $(\tilde{N}, \bar{\nabla}, \tilde{\rho})$ be an indefinite statistical manifold and $(\tilde{\rho}, \phi, \nu)$ an almost contact metric structure on $\bar{N}$. Then $(\bar{\nabla}, \tilde{\rho}, \phi, \nu)$ is an indefinite  Sasakian statistical structure if and only if the following conditions hold:
	
	\begin{equation}\label{eq2.19}
	\bar{\nabla}_{X}\phi Y - \phi\bar{\nabla}^{*}_{X}Y =   \tilde{\rho}(X,Y)\nu - \eta(Y)X,
	\end{equation}
	\begin{equation}\label{eq2.20}
	\bar{\nabla}_{X}\nu = -\phi X + \tilde \rho(\bar{\nabla}_{X}\nu, \nu)\nu,
	\end{equation}	
	for the vector fields $X, Y$ on $\tilde{N}$."	
\end{theorem}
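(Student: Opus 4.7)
The plan is to exploit the decomposition $\bar{\nabla} = \bar{\nabla}^{\circ} + K$ together with $\bar{\nabla}^{*} = \bar{\nabla}^{\circ} - K$, so that expressions involving both connections split transparently into a Levi-Civita part and a $K$-part.

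For the forward direction, I would substitute the decomposition into the left side of (2.19), obtaining
\[
\bar{\nabla}_X \phi Y - \phi\bar{\nabla}^{*}_X Y = (\bar{\nabla}^{\circ}_X \phi)Y + K(X, \phi Y) + \phi K(X, Y).
\]
The first summand equals $\tilde{\rho}(X, Y)\nu - \eta(Y) X$ by (2.17), and the remaining $K$-terms cancel by (2.18), so (2.19) follows. For (2.20), I would start from $\bar{\nabla}_X \nu = \bar{\nabla}^{\circ}_X\nu + K(X, \nu) = -\phi X + K(X, \nu)$; specializing (2.18) at $Y = \nu$ together with $\phi \nu = 0$ gives $\phi K(X, \nu) = 0$, so $K(X, \nu)$ is a multiple of $\nu$. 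Pairing against $\nu$ and using $\tilde{\rho}(\bar{\nabla}^{\circ}_X \nu, \nu) = \tfrac{1}{2} X\tilde{\rho}(\nu, \nu) = 0$ identifies that multiple as $\tilde{\rho}(\bar{\nabla}_X\nu, \nu)$, yielding (2.20).

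For the reverse direction, I would assume (2.19) and (2.20). Setting $Y = \nu$ in (2.19) and using $\phi \nu = 0$, $\eta(\nu) = 1$ collapses the identity to $\phi \bar{\nabla}^{*}_X \nu = X - \eta(X) \nu$; a further application of $\phi$ together with $\phi^2 Z = -Z + \eta(Z) \nu$ yields the conjugate counterpart $\bar{\nabla}^{*}_X \nu = -\phi X + \tilde{\rho}(\bar{\nabla}^{*}_X \nu, \nu)\nu$ of (2.20). Averaging this with (2.20) and invoking $X\tilde{\rho}(\nu, \nu) = 0$ produces the Sasakian identity $\bar{\nabla}^{\circ}_X \nu = -\phi X$, which is (2.16).

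The hard part will be producing the conjugate form of (2.19), namely $\bar{\nabla}^{*}_X \phi Y - \phi \bar{\nabla}_X Y = \tilde{\rho}(X, Y)\nu - \eta(Y) X$. To this end I set $T(X, Y) := \bar{\nabla}^{*}_X \phi Y - \phi \bar{\nabla}_X Y$, replace $Y$ by $\phi Y$ in (2.19), expand using $\phi^2 Y = -Y + \eta(Y)\nu$, and invoke (2.20) to process the resulting $\bar{\nabla}_X \nu$-term; after one application of $\phi$ and some cancellation, the result should collapse to $\phi T(X, Y) = -\eta(Y)\phi X$. A second application of $\phi$ then shows $T(X, Y) = c(X, Y)\nu - \eta(Y) X$ for some scalar $c(X, Y)$, which I pin down by pairing $T(X, Y)$ with $\nu$ through the duality (2.1) and the compatibility $\tilde{\rho}(\phi X, \phi Y) = \tilde{\rho}(X, Y) - \eta(X)\eta(Y)$; this forces $c(X, Y) = \tilde{\rho}(X, Y)$. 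With the conjugate of (2.19) secured, adding it to (2.19) and dividing by two yields (2.17), while subtracting (and recalling $\bar{\nabla} - \bar{\nabla}^{*} = 2K$) yields $K(X, \phi Y) + \phi K(X, Y) = 0$, which is (2.18), completing the equivalence.
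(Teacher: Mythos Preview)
The paper does not supply its own proof of this theorem: the statement is quoted verbatim from \cite{furuhata2017} and is used as background, so there is no in-paper argument against which to compare your proposal.

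That said, your outline is sound on its own terms. The forward direction is a direct computation using the splitting $\bar{\nabla}=\bar{\nabla}^{\circ}+K$, $\bar{\nabla}^{*}=\bar{\nabla}^{\circ}-K$, and the pieces (2.16)--(2.18), exactly as you describe. In the reverse direction, your strategy of first deriving the $\bar{\nabla}^{*}$-analogue of (2.20) from (2.19) at $Y=\nu$, then averaging to recover (2.16), is correct; and the route to the conjugate of (2.19) via substituting $Y\mapsto\phi Y$, using (2.20) and the duality relation (2.1) to reduce to $\phi T(X,Y)=-\eta(Y)\phi X$, and then identifying the $\nu$-coefficient through $\tilde{\rho}(\phi X,\phi Y)=\tilde{\rho}(X,Y)-\eta(X)\eta(Y)$, all goes through as stated. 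Adding and subtracting (2.19) and its conjugate then yields (2.17) and (2.18), which together with (2.16) give the Sasakian statistical structure in the sense of Definition~\ref{def2.4}. One small point worth making explicit in a final write-up: the passage from (2.16)--(2.17) back to ``$(\tilde{\rho},\phi,\nu)$ is a Sasakian structure'' uses that, for an almost contact metric structure, identity (2.17) already forces normality and the contact condition; this is standard but should be cited or remarked upon rather than left implicit.
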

     
\section{Contact SGL Submanifolds}
Dogan et al.\cite{dogan2019} introduced a new category of submanifolds known as SGL submanifolds, which encompasses invariant lightlike, screen real lightlike, and generic lightlike submanifolds. In this perspective, the notion of contact SGL submanifolds has been formulated for an indefinite Sasakian statistical manifold equipped with a QS-metric connection.
\begin{definition}\label{def31}
``A real lightlike submanifold $N$ of an indefinite  Sasakian statistical manifold $\tilde{N}$  is a SGL submanifold if 
      	 \begin{enumerate}
\item   $Rad(TN)$ is invariant respect to ${\phi}$, that is,
\[
\phi(Rad(TN)) = Rad(TN).
\]   
\item There exists a subbundle $E_{\circ}$ of $S(TN)$ such that
\[
E_{\circ} = \phi(S(TN)) \cap S(TN)
\]
where $E_{\circ}$ is a non-degenerate distribution on $N$.	 
      	 \end{enumerate}
From definition of a SGL submanifold, there exists a complementary non-degenerate distribution $E^{\prime}$ to  $E_{\circ}$ in $S(TN)$ such that
\[
S(TN) = E_{\circ} \oplus E^{\prime}\perp \nu
\] 
where
\[
\phi(E^{\prime}) \not\subseteq S(TN) \quad and \quad \phi(E^{\prime}) \not\subseteq S(TN^{\perp})."
\]    	 
\end{definition}
Let $P_{\circ}$, $P_1$ and $Q$ be the projection morphisms on $E_{\circ}$, $Rad(TN)$ and $E^{\prime}$, respectively.\\

Then, for $X \in \Gamma(TN)$,
\begin{align}
X &= P_{\circ}X + P_1 X + QX + \eta(X)\nu \nonumber\\
&=PX +QX+ \eta(X)\nu \label{eq3.1}
\end{align}
where $E = E_{\circ} \perp Rad(TN)$, $E$ is invariant and $PX \in \Gamma (E)$, $QX \in \Gamma(E^{\prime})$.
From (\ref{eq3.1}), we have
\begin{equation}\label{eq3.2}
\phi X = TX + wX,
\end{equation}
where $TX $ and $wX$ are tangential and transversal parts of $\phi X$, respectively. Also, $\phi(E^{\prime}) \neq E^{\prime}$.\\
on the other hand, for $Y \in \Gamma(E^{\prime})$, we obtain
\begin{equation}\label{eq3.3}
\phi Y = TY + wY
\end{equation}
such that $T Y \in \Gamma (E^{\prime})$ and $ wY \in \Gamma(S(TN^{\perp}))$.\\

Also, for $V \in \Gamma(tr(TN))$
\begin{equation}\label{eq3.4}
\phi V = BV + CV,
\end{equation}
where $BV$ and $CV$ are tangential and transversal parts of $\phi V$, respectively.

\begin{example}
Let $\tilde{N}= (R^{13}_6, \tilde{\rho})$ be an indefinite Sasakian manifold,  where  $\tilde{\rho}$ is of signature $ (-,-,-,+,+,+,-,-,-,+,+,+,+)$ with respect to the basis \{$\partial{x^1},\partial{x^2},\\\partial{x^3},\partial{x^4},\partial{x^5},\partial{x^6},\partial{y^1},\partial{y^2},\partial{y^3},\partial{y^4},\partial{y^5},\partial{y^6},\partial z$\}. If $(x_1,y_1,x_2,y_2,x_3,y_3,x_4,y_4,x_5,\\y_5,x_6,y_6,z)$ is the standard coordinate system of  $R^{13}_6$. Following definition (\ref{def2.4}), the triplet $(\bar \nabla = \bar \nabla^{\circ} + K, \tilde{\rho}, \phi)$ where $K$ satisfies (\ref{eq2.18}), defines an indefinite Sasakian statistical structure on $\tilde{ N}$. Consider a submanifold $N$ of $R^{13}_6 $ given by $ X=(0,u_5\cos\alpha,-u_5,-u_6,u_1\cosh\alpha,u_2\cosh\alpha, u_1\sinh\alpha -u_2, u_1+u_2\sinh\alpha,u_5\sin\alpha,\\ u_6\sin\alpha,\sin u_3\sinh u_4,\cos u_3\cosh u_4, u_7 )$. Then, $N$ is a SGL submanifold of an indefinite Sasakian statistical manifold $R^{13}_6$, for complete proof see \cite{D}.
\end{example}

\section{QS metric connection}
 Let $\tilde N$
be an indefinite Sasakian statistical manifold equipped with a QS metric connection $\tilde D$.\\
``For a Levi-Civita connection $\bar \nabla^{\circ}$ on an indefinite Sasakian statistical manifold $(\tilde N, \phi ,\tilde \rho)$ where $\bar \nabla^{\circ} = \frac{1}{2}\{\bar\nabla 
+ \bar\nabla^*\}$, we set
\begin{equation}\label{eq4.1}
\tilde D _X Y  =\bar\nabla_XY - K(X,Y) - \eta(X)\phi Y, 
\end{equation}
and 
\begin{equation}\label{eq4.2}
\tilde D _X Y  =\bar\nabla^*_XY + K(X,Y) - \eta(X)\phi Y, 
\end{equation}
for any $X,Y\in \Gamma({T\tilde N})$. Since $\bar\nabla $ and $\bar\nabla^*$ are torsion free, therefore from the relationship between dual connections, we obtain
\begin{equation}\label{eq4.3}
(\tilde D_X \tilde \rho)(Y,Z)= 0,
\end{equation}
and 
\begin{equation}\label{eq4.4}
\tilde T ^{\tilde D}(X,Y)= \eta(Y)\phi X-\eta(X)\phi Y,
\end{equation}
for any $X,Y,Z \in \Gamma({T\tilde N})$ where $\tilde T ^{\tilde D}$ is a torsion tensor of the connection $\tilde D$ and $\eta$ is a 1-form associated with the vector field $U$ on $\tilde M$ by $\pi(X) = \tilde{ \rho}(X,U)$. So, $\tilde D$ becomes a QS metric connection." 
\begin{theorem}
Let $(\tilde{N}, \bar{\nabla}, \bar{\nabla}^{*}, \tilde{\rho})$ be an indefinite statistical manifold with an almost contact metric structure $(\tilde{\rho}, \phi, \nu)$. Then $(\tilde{N},\bar{\nabla}, \bar{\nabla}^{*}, \tilde{\rho}, \phi, \nu)$ is said to be an indefinite  almost contact metric statistical manifold  $\tilde{N}$ with quarter QS metric connection if and only if
\begin{equation}\label{eq4.5}
(\tilde D_X \phi)Y = \tilde\rho(X,Y)\nu - \eta(Y)X
\end{equation}
\begin{equation}\label{eq4.6}
\tilde D_X \nu = -\phi X + \eta(\tilde D_X \nu)\nu
\end{equation}
\end{theorem}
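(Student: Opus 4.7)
The plan is to reduce both identities to the standard Sasakian characterizations (\ref{eq2.16}) and (\ref{eq2.17}) of the Levi-Civita connection $\bar\nabla^{\circ}$. First I would average the two defining formulas (\ref{eq4.1}) and (\ref{eq4.2}): adding them gives $2\tilde D_X Y = (\bar\nabla+\bar\nabla^*)_X Y - 2\eta(X)\phi Y$, and since $\bar\nabla^{\circ}=\tfrac12(\bar\nabla+\bar\nabla^*)$, this yields the key identity
\begin{equation*}
\tilde D_X Y \;=\; \bar\nabla^{\circ}_X Y - \eta(X)\phi Y.
\end{equation*}
The $K$-terms cancel automatically because they appear in (\ref{eq4.1}) and (\ref{eq4.2}) with opposite signs; this single identity reduces the whole problem to a Levi-Civita calculation.

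For the necessary direction, I would expand
\begin{equation*}
(\tilde D_X \phi)Y \;=\; \tilde D_X(\phi Y) - \phi\,\tilde D_X Y
\end{equation*}
and substitute the key identity into each term. The contributions $-\eta(X)\phi(\phi Y)$ coming from $\tilde D_X(\phi Y)$ and $-\phi(-\eta(X)\phi Y)=\eta(X)\phi^2 Y$ coming from $-\phi\,\tilde D_X Y$ cancel, leaving precisely $(\bar\nabla^{\circ}_X\phi)Y$, which by (\ref{eq2.17}) equals $\tilde\rho(X,Y)\nu - \eta(Y)X$; this is (\ref{eq4.5}). For (\ref{eq4.6}), set $Y=\nu$: since $\phi\nu=0$, the correction term drops and $\tilde D_X\nu = \bar\nabla^{\circ}_X\nu = -\phi X$ by (\ref{eq2.16}); moreover $\eta(\tilde D_X\nu) = -\eta(\phi X) = 0$ because $\eta\circ\phi=0$, so the advertised correction term $\eta(\tilde D_X\nu)\nu$ vanishes identically and both sides agree.

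For the converse, I would run the same computation in reverse. Assuming (\ref{eq4.5}), the key identity transforms the left-hand side back into $(\bar\nabla^{\circ}_X\phi)Y$, so (\ref{eq4.5}) collapses to $(\bar\nabla^{\circ}_X\phi)Y = \tilde\rho(X,Y)\nu - \eta(Y)X$, which is (\ref{eq2.17}). Similarly (\ref{eq4.6}) collapses to $\bar\nabla^{\circ}_X\nu = -\phi X$, which is (\ref{eq2.16}). Combined with the statistical compatibility $K(X,\phi Y)+\phi K(X,Y)=0$ already built into the Sasakian statistical hypothesis via Definition \ref{def2.4}, this is exactly the Sasakian statistical characterization of Theorem 2.6 expressed through (\ref{eq2.19})-(\ref{eq2.20}).

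The main obstacle I anticipate is not conceptual but notational: tracking the $\phi^2$ correction terms produced when $\phi$ is pushed past $\tilde D_X$, and verifying that the cancellation in the key identity depends only on the opposite signs of the $K$-terms in (\ref{eq4.1})-(\ref{eq4.2}) rather than on (\ref{eq2.18}). The role of (\ref{eq2.18}) is confined to bridging between the $\tilde D$-formulation and the $(\bar\nabla,\bar\nabla^*)$-formulation (\ref{eq2.19}); within the calculation itself it is silent, which is what makes the averaging trick so clean.
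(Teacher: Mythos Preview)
Your argument is correct. The averaging of (\ref{eq4.1}) and (\ref{eq4.2}) to obtain $\tilde D_X Y = \bar\nabla^{\circ}_X Y - \eta(X)\phi Y$ is the right reduction, the $\phi^2$ cancellation in $(\tilde D_X\phi)Y = (\bar\nabla^{\circ}_X\phi)Y$ is clean, and your handling of the converse via (\ref{eq2.18}) correctly bridges back to the characterization (\ref{eq2.19})--(\ref{eq2.20}). Note, however, that the paper states this theorem \emph{without proof}: there is no argument in the text between the statement and the subsequent paragraph on submanifolds, so there is nothing to compare your approach against. Your proof therefore supplies what the paper omits.
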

Let $N$ be a contact SGL submanifold of  $\tilde N$ with QS metric connection $\tilde D$. Let $D$ be the induced linear connection on $N$ from $\tilde D$. Therefore the Gauss formula is as follows:
\begin{equation}\label{eq4.7}
\tilde D_XY = D_XY + \tilde h^l(X,Y) + \tilde h^s(X,Y),
\end{equation}
for any $X,Y \in \Gamma(TN)$, where $D_XY \in \Gamma(TN)$ and $\tilde h^l$, $\tilde h^s$ are lightlike second fundamental form and the screen second fundamental form of $N$, respectively. 
Now from (\ref{eq2.1}),(\ref{eq4.7}), (\ref{eq3.3}) in (\ref{eq4.1}), we get
\begin{equation}\label{eq4.8}
D_XY = \nabla_XY - \eta(X)TY - K(X,Y),
\end{equation}
\begin{equation}\label{eq4.9}
\tilde h^l(X,Y) =h^l(X,Y),\quad \quad
\tilde h^s(X,Y)=h^s(X,Y) -\eta(X) wY
\end{equation}
Further, using (\ref{eq4.3}), (\ref{eq4.7}), (\ref{eq4.8}) we have
\begin{equation}\label{eq4.10}
(D_X\tilde \rho)(Y,Z) =  \tilde \rho(\tilde h^l(X,Y),Z) + \tilde \rho(Y, \tilde h^l(X,Z)),
\end{equation}
and 
\begin{equation}\label{eq4.11}
T^D(X,Y) = \eta(Y) T X - \eta(X)TY.
\end{equation}
for any $X,Y,Z \in \Gamma(TN)$, where $T^D$ is torsion tensor of the induced connection $D$ on $N$. Subsequently the following result ensues.
\begin{theorem}
Let $N$ be a SGL submanifold of $\tilde{N}$ with a QS metric connection $\tilde{D}$. Then the connection $D$ on  $N$ is a QS non-metric connection.
\end{theorem}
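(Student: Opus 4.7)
The plan is to verify the two defining conditions in turn: (i) the torsion $T^D$ of the induced connection $D$ has the quarter-symmetric form prescribed in equation (\ref{eq1.1}) with a $(1,1)$-tensor field on $N$ playing the role of $\phi$, and (ii) the connection $D$ fails to preserve the induced metric $\rho$, i.e.\ $D\rho \not\equiv 0$. Both ingredients are already packaged in equations (\ref{eq4.10}) and (\ref{eq4.11}), so the work consists in explaining why these identities exactly match the abstract QS non-metric template rather than grinding out new computations.

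First, I would point to (\ref{eq4.11}), which reads $T^D(X,Y) = \eta(Y)TX - \eta(X)TY$ for $X,Y\in\Gamma(TN)$. I would note that $T$ is a genuine $(1,1)$-tensor field on $N$: it is the tangential component of $\phi$ under the decomposition (\ref{eq3.2}), and $\mathcal{F}(N)$-linearity in its argument is immediate from the SGL splitting in Definition \ref{def31}. I would also remark that the 1-form $\eta$ on $\tilde N$ restricts to a 1-form on $N$ because the characteristic vector field $\nu$ lies in $\Gamma(TN)$: indeed, Definition \ref{def31} builds $\nu$ into the screen distribution via $S(TN) = E_\circ \oplus E' \perp \nu$. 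Once these two observations are in place, (\ref{eq4.11}) is literally the QS torsion identity (\ref{eq1.1}) with $\phi$ replaced by $T$.

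Next, I would appeal to (\ref{eq4.10}), $(D_X\tilde\rho)(Y,Z) = \tilde\rho(\tilde h^l(X,Y),Z) + \tilde\rho(Y,\tilde h^l(X,Z))$. Since $\tilde h^l$ is the lightlike second fundamental form of the SGL submanifold and does not vanish in general, the right-hand side is not identically zero. Hence $D\rho \neq 0$, so $D$ is non-metric. Combining this with the torsion computation yields that $D$ is a QS non-metric connection on $N$, as asserted.

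The main obstacle is conceptual rather than computational: one must be careful that the 1-form $\eta$ and the tensor $T$ appearing on $N$ are legitimate objects of the right type (a 1-form and a $(1,1)$-tensor, respectively) so that (\ref{eq4.11}) genuinely has the shape of the QS torsion formula, and that the failure of metric compatibility identified in (\ref{eq4.10}) is essential and not an artifact of a particular choice. Both points are handled by the SGL structure of $N$ and the fact that $\tilde h^l$ is generically non-trivial on lightlike submanifolds; no further work is required beyond citing (\ref{eq4.10}) and (\ref{eq4.11}).
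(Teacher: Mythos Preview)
Your proposal is correct and mirrors the paper's approach exactly: the paper simply states that the result ``ensues'' from equations (\ref{eq4.10}) and (\ref{eq4.11}), and you have spelled out precisely why those two identities encode the QS torsion form and the failure of metric compatibility. Your added care in checking that $T$ is a genuine $(1,1)$-tensor on $N$ and that $\eta$ restricts to $N$ (since $\nu$ is tangent) is a welcome elaboration that the paper leaves implicit.
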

Suppose that $\tilde h^l$ vanishes identically on $N$. Therefore
 we arrive to the following outcome:
\begin{theorem}
Let $N$ be a SGL submanifold of  $\tilde{N}$ with a QS metric connection $\tilde{D}$. Then the connection $D$ on  $N$ is also a QS metric connection if and only if $\tilde h^l$ vanishes identically on $N$.
\end{theorem}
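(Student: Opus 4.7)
The plan is to prove both implications using the identity (\ref{eq4.10}) and the torsion identity (\ref{eq4.11}) established in this section. The sufficiency direction is immediate; the necessity requires a bilinear-algebra argument that exploits the duality between $Rad(TN)$ and $ltr(TN)$.

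For sufficiency, I would assume $\tilde h^l \equiv 0$. Substituting into (\ref{eq4.10}) gives $(D_X\rho)(Y,Z)=0$ for all $X,Y,Z\in\Gamma(TN)$, so $D$ is a metric connection on $N$. The torsion formula (\ref{eq4.11}) already has the quarter-symmetric form, with the tangential operator $T$ of (\ref{eq3.2}) playing the role of $\phi$ and $\eta$ restricted to $N$ (note that $\nu$ lies in $S(TN)$ by Definition \ref{def31}), so $D$ is a QS metric connection on $N$.

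For necessity, assume $D$ is QS metric, so $(D_X\rho)(Y,Z) = 0$. Then (\ref{eq4.10}) reduces to
\[
\tilde\rho(\tilde h^l(X,Y),Z) + \tilde\rho(Y, \tilde h^l(X,Z)) = 0
\]
for all $X,Y,Z\in\Gamma(TN)$. Since $\tilde h^l$ takes values in $ltr(TN)$, which pairs non-trivially only with $Rad(TN)$, I would use $TN = Rad(TN)\perp S(TN)$ and test the identity on two classes of slots. First, with $Y\in\Gamma(S(TN))$ and $Z=\xi\in\Gamma(Rad(TN))$, the second term vanishes automatically (as $\tilde h^l(X,\xi)\in ltr(TN)$ is orthogonal to $S(TN)$), and non-degeneracy of the $ltr(TN)$--$Rad(TN)$ pairing yields $\tilde h^l(X,Y) = 0$ for every $Y\in\Gamma(S(TN))$. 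Second, with $Y=\xi_i,\,Z=\xi_j\in\Gamma(Rad(TN))$, expanding $\tilde h^l(X,\xi_i) = \sum_k f_{ik}(X)N^{\prime}_k$ in a quasi-orthonormal basis converts the identity into the antisymmetry $f_{ij}(X) + f_{ji}(X) = 0$. Specializing $X$ to a radical vector field $\xi_\ell$ and setting $g_{\ell ij}:=f_{ij}(\xi_\ell)$, the symmetry $\tilde h^l(\xi_\ell,\xi_i) = \tilde h^l(\xi_i,\xi_\ell)$ (inherited from $h^l$ via (\ref{eq4.9})) produces components simultaneously symmetric in the first two indices and antisymmetric in the last two; a short cyclic chain ($g_{\ell i j}=g_{i\ell j}=-g_{ij\ell}=-g_{ji\ell}=g_{j\ell i}=g_{\ell j i}=-g_{\ell i j}$) forces $g\equiv 0$. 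Combined with the first step and bilinearity, $\tilde h^l\equiv 0$ on all of $TN\times TN$.

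The main obstacle is the radical--radical case: metric compatibility by itself only yields antisymmetry among the scalar components of $\tilde h^l|_{Rad(TN)\times Rad(TN)}$, so one has to carefully combine it with the symmetry of $\tilde h^l$ to obtain enough independent relations to force the vanishing. The remaining verifications are routine bookkeeping using the decomposition $TN = Rad(TN)\perp S(TN)$ together with the bilinearity and symmetry of $\tilde h^l$.
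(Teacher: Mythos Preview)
Your proposal is correct and, for the sufficiency direction, coincides with the paper's argument: the paper simply observes that once $\tilde h^l\equiv 0$, equation (\ref{eq4.10}) makes $D$ metric while (\ref{eq4.11}) already gives the quarter-symmetric torsion. For the necessity direction the paper offers no details at all---it states the theorem immediately after the remark ``Suppose that $\tilde h^l$ vanishes identically on $N$''---whereas you actually carry out the argument, splitting $TN=S(TN)\perp Rad(TN)$, using the $ltr(TN)$--$Rad(TN)$ duality to kill $\tilde h^l$ on the screen slot, and then running the standard symmetric/antisymmetric cyclic chain on the radical block; this is genuinely more complete than what the paper provides, and your identification of the radical--radical case as the only nontrivial step is accurate.
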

Corresponding to QS metric connection $\tilde{D}$, the Weingarten formulae  are as below:
 
   \begin{equation}\label{eq4.12}
   	\tilde D_X N^{\prime} = -\tilde A_{N^{\prime}} X + \tilde\nabla^l_XN^{\prime} +\tilde D^s(X,N^{\prime}), 
   \end{equation}
   \begin{equation}\label{eq4.13}
   	\tilde D_X W =-\tilde A_W X +\tilde\nabla^s_X W +\tilde D^l(X,W),
   \end{equation}
   for any  $X, Y \in \Gamma(TN)$, $N^{\prime} \in \Gamma(ltr(TN))$ and $W\in \Gamma(S(TN^{\perp}))$.
 Using (\ref{eq2.4}),(\ref{eq2.5}) (\ref{eq4.12}),(\ref{eq4.13}) and (\ref{eq4.1}) and then equating the tangential and transversal parts, we derive
\begin{equation}\label{eq4.14}
\tilde A_{N^{\prime}} X = A_{N^{\prime}}X + K(X,N^{\prime}) +\eta(X)BW,
\end{equation}
\begin{equation}\label{eq4.15}
\tilde\nabla^l_X N^{\prime}= \nabla^l_XN^{\prime}, 
\end{equation}
\begin{equation}\label{eq4.16}
\tilde D^s(X,N^{\prime}) =  D^s(X,N^{\prime}) -\eta(X)CW,
\end{equation}
 Consider $P$ as the projection of $TN$ on $S(TN)$. Then, for any $X, Y \in \Gamma (TN)$, we have
\begin{equation}\label{eq4.17}
D_XPY = D^{\prime}_XPY + \tilde h^{\prime}(X,PY),\quad D_X\xi = -\tilde A^{\prime}_{\xi}X + \tilde\nabla^{\prime t}_X\xi,
\end{equation}
where $(D^{\prime}_XPY ,\tilde A^{\prime}_{\xi}X)$ and $(\tilde h^{\prime}(X,PY),\tilde\nabla^{\prime t}_X\xi) $ belong to $S(TN)$ and $Rad(TN)$ respectively. Thus we obtain
\begin{equation}\label{eq4.18}
D^{\prime}_XPY = \nabla^{\prime}_X PY - \eta(X)TPY - K(X,PY),
\end{equation}
\begin{equation}\label{eq4.19}
\tilde h^{\prime}(X,PY) = h^{\prime}(X,PY) 
\end{equation}
and
\begin{equation}\label{eq4.20}
\tilde A^{\prime}_{\xi}X = A^{\prime}_{\xi}X + \eta(X)T\xi + K(X,\xi),
\end{equation}
\begin{equation}\label{eq4.21}
\tilde\nabla^{\prime t}_X\xi = \nabla^{\prime t}_X\xi, 
\end{equation}
\begin{theorem}
Let $N$ be a contact SGL submanifold of $\tilde{N}$ with a QS metric connection $\tilde{D}$ with structure vector field $\nu$ tangent to $N$. Then 
\begin{enumerate}[(i)]
\item the distribution $E_{\circ}$ is integrable if and only if
\[
2\tilde\rho(Y,\phi X) = \eta(\tilde D_X\nu)\eta(Y) - \eta(\tilde D_Y\nu)\eta(X)
\]
\item the distribution $E_{\circ}\perp \nu$ is integrable if and only if
\[
\tilde\rho(D_X^{\prime}\phi Y - D_Y^{\prime}\phi X, TZ) = \tilde\rho(\tilde h^s(Y,\phi X) - \tilde h^s(X,\phi Y), wZ),
\]
\[
\tilde\rho(\tilde h^{\prime}(X,\phi Y),\phi N) = \tilde\rho(\tilde h^{\prime}(Y,\phi X),\phi N),
\]
for $X,Y \in\Gamma(E_{\circ})$.
\end{enumerate}
\end{theorem}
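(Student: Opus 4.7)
The strategy is to invoke Frobenius's theorem: each distribution is integrable iff it is Lie-bracket closed. Using the torsion identity (\ref{eq4.4}), for $X,Y\in\Gamma(E_\circ)$ one has $\eta(X)=\eta(Y)=0$, hence $[X,Y]=\tilde D_XY-\tilde D_YX$, and the membership test reduces to pairing $[X,Y]$ through $\tilde\rho$ against sections that detect each complementary direction.

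For (i), the principal computation is the $\nu$-coefficient $\eta([X,Y])=\tilde\rho([X,Y],\nu)$. I would invoke the metric compatibility (\ref{eq4.3}) in the form $\tilde\rho(\tilde D_XY,\nu)=X\eta(Y)-\tilde\rho(Y,\tilde D_X\nu)$, then substitute (\ref{eq4.6}), $\tilde D_X\nu=-\phi X+\eta(\tilde D_X\nu)\nu$. The $\phi$-skewness of $\tilde\rho$ from (\ref{eq2.14}) converts $-\tilde\rho(X,\phi Y)$ into $\tilde\rho(Y,\phi X)$, collapsing the two resulting terms into $2\tilde\rho(Y,\phi X)$, while the residual pieces give $-\eta(\tilde D_X\nu)\eta(Y)+\eta(\tilde D_Y\nu)\eta(X)$. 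Setting $\eta([X,Y])=0$ and transposing yields the displayed identity, after which the vanishing of the $Rad(TN)$ and $E'$ components of $[X,Y]$ is addressed separately via the $\phi$-invariance of $E_\circ$ and of $Rad(TN)$.

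For (ii), the forbidden components are those in $E'$ and in $Rad(TN)$. Applying (\ref{eq4.5}), the $\tilde\rho(X,Y)\nu-\tilde\rho(Y,X)\nu$ terms cancel by symmetry of $\tilde\rho$ and the $\eta(\cdot)$-terms vanish on $E_\circ$, yielding
\[
\tilde D_X\phi Y-\tilde D_Y\phi X=\phi[X,Y].
\]
To isolate the $E'$-component I would pair with $\phi Z=TZ+wZ$ for $Z\in\Gamma(E')$ via (\ref{eq3.3}): expanding $\tilde D_X\phi Y$ through the Gauss formula (\ref{eq4.7}) and the screen decomposition (\ref{eq4.17}) as $D^{\prime}_X\phi Y+\tilde h^{\prime}(X,\phi Y)+\tilde h^l(X,\phi Y)+\tilde h^s(X,\phi Y)$, the mutual orthogonality of $S(TN)$, $Rad(TN)$, $ltr(TN)$, $S(TN^\perp)$ kills every pairing except $D^{\prime}$ against $TZ$ and $\tilde h^s$ against $wZ$, producing the first displayed equivalence. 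To isolate the $Rad(TN)$-component I would pair with $\phi N$ for $N\in\Gamma(ltr(TN))$, which sits in the transversal bundle by duality with the $\phi$-invariant $Rad(TN)$; the same expansion leaves only the $\tilde h^{\prime}$-terms, producing the second equivalence.

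The main technical obstacle is bookkeeping. The QS Gauss--Weingarten identities (\ref{eq4.9}), (\ref{eq4.15})--(\ref{eq4.21}) differ from their Levi-Civita analogues by $K$- and $\eta$-corrections that must be carried through every $\phi$-decomposition, and the $\phi$-image of $Z\in\Gamma(E')$ splits across $E'$ and $S(TN^\perp)$ via (\ref{eq3.3})--(\ref{eq3.4}). Ensuring that the symmetric $(X,Y)$-contributions (such as $\tilde\rho(X,Y)\nu$ and $K(X,\phi Y)+K(Y,\phi X)$) cancel cleanly upon taking the antisymmetric difference $\tilde D_X\phi Y-\tilde D_Y\phi X$ is the delicate step.
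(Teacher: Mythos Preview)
Your approach is essentially the paper's: compute $\tilde\rho([X,Y],\nu)$ via metric compatibility of $\tilde D$ and (\ref{eq4.6}) for part (i), and for part (ii) convert $\phi\tilde D_XY$ to $\tilde D_X\phi Y$ via (\ref{eq4.5}), then expand through the Gauss and screen decompositions (\ref{eq4.7}), (\ref{eq4.17}) and pair against $\phi Z=TZ+wZ$ and $\phi N'$. The paper carries this out in exactly these steps, arriving at (\ref{eq4.22})--(\ref{eq4.24}).

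One point of overcomplication: the $K$- and $\eta$-corrections you flag as the ``main technical obstacle'' never enter. The entire argument lives at the level of the QS connection $\tilde D$ and its induced objects $D$, $D'$, $\tilde h^l$, $\tilde h^s$, $\tilde h'$; the identities (\ref{eq4.8})--(\ref{eq4.9}) and (\ref{eq4.18})--(\ref{eq4.21}) relating these back to the statistical connections $\nabla,\nabla^*$ are irrelevant here, so there is no $K$-bookkeeping to do and the antisymmetrization is immediate once you note $\eta(X)=\eta(Y)=0$ on $E_\circ$. Your observation that part (i) as stated only tests the $\nu$-component (leaving the $Rad(TN)$ and $E'$ components unaddressed) is accurate; the paper's proof likewise computes only $\tilde\rho([X,Y],\nu)$.
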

\begin{proof} (i) Suppose $E_{\circ}$ is integrable. Then, 
\[
\tilde\rho([X,Y],\nu) = 0, \;\;\;\; X,Y\in\Gamma(E_{\circ})
\]
Using metric connection $\tilde D$,

\begin{eqnarray}
\tilde\rho([X,Y],\nu)&=& \tilde\rho(\tilde D_XY- \tilde D_YX +\eta(Y)\phi(X) - \eta(X)\phi Y,\nu)\nonumber\\
&=& -\tilde\rho(Y,\tilde D_X\nu) + \tilde\rho(X,\tilde D_Y\nu)\nonumber
\end{eqnarray}
From equation (\ref{eq4.6}), we have
\begin{eqnarray}
\tilde\rho([X,Y],\nu) &=& \rho(Y,\phi X) -\eta(\tilde D_X\nu)\tilde\rho(Y,\nu) - \rho(X,\phi Y) + \eta(\tilde D_Y\nu)\tilde\rho(X,\nu)\nonumber\\
&=& 2\tilde\rho(Y,\phi X) - \eta(\tilde D_X\nu)\eta(Y) + \eta(\tilde D_Y\nu)\eta(X) \label{eq4.22}
\end{eqnarray}
(ii)  We know that  $E_{\circ}\perp \nu$ is integrable if and only if $[X,Y]\in\Gamma(E_{\circ})$ for all $X,Y\in\Gamma(E_{\circ})$, we get
\[
\tilde\rho([X,Y],Z) = \tilde\rho([X,Y],N) = 0
\]
for $Z \in\Gamma(E^{\prime})$ and $N\in\Gamma(ltr(TN))$.
\begin{eqnarray}
\tilde\rho([X,Y],Z) &=& \tilde\rho(\phi\tilde D_XY - \phi\tilde D_YX + \eta(Y)\phi^2 X - \eta(X)\phi ^2Y, \phi Z)\nonumber\\
&=& \tilde\rho(\tilde D_X\phi Y - \tilde D_Y\phi X,\phi Z) +2\eta(Y)\tilde\rho(X,\phi Z)  - 2 \eta(X)\rho(Y,\tilde\phi Z)\nonumber\\
&=& \tilde\rho(D_X \phi Y - D_Y\phi X, TZ) +\tilde\rho(\tilde h^s(X,\phi Y) - \tilde h^s(Y,\phi X),wZ) \label{eq4.23}
\end{eqnarray}
\begin{eqnarray}
\tilde\rho([X,Y],N^{\prime}) 
&=& \tilde\rho(\tilde D_X\phi Y - \tilde D_Y\phi X,\phi N^{\prime}) +2\eta(Y)\tilde\rho(X,\phi N^{\prime})  - 2 \eta(X)\rho(Y,\tilde\phi N^{\prime})\nonumber\\
&=& \tilde\rho(D_X \phi Y - D_Y\phi X,\phi N^{\prime})\nonumber\\
&=& \tilde\rho(\tilde h^{\prime}(X,\phi Y)- \tilde h^{\prime}( Y,\phi X), \phi N^{\prime})\label{eq4.24}
\end{eqnarray}
The result follows from equations (\ref{eq4.22}), (\ref{eq4.23}) and (\ref{eq4.24}).
\end{proof}
\begin{theorem}
Let $N$ be a contact SGL submanifold of $\tilde{N}$ with a QS metric connection $\tilde{D}$ with structure vector field $\nu$ tangent to $N$. Then 
\begin{enumerate}[(i)]
\item the distribution $E$ is not integrable. 
\item the distribution $E\perp \nu$ is integrable if and only if
\[
\tilde\rho(D_X^{\prime}\phi Y - D_Y^{\prime}\phi X, TZ) = \tilde\rho(\tilde h^s(Y,\phi X) - \tilde h^s(X,\phi Y), wZ),
\]
for $X,Y \in\Gamma(E)$, $Z\in\Gamma(E^{\prime})$.
\end{enumerate}
\end{theorem}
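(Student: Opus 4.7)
My plan is to test the Lie bracket $[X,Y]$ against a spanning collection of directions normal to the distribution in question, reducing each integrability problem to the vanishing of specific metric pairings. Since the orthogonal decomposition $TN = E \perp E' \perp \langle\nu\rangle$ has $E'$ non-degenerate, the $E'$-component of $[X,Y]$ vanishes iff $\tilde\rho([X,Y], Z) = 0$ for all $Z \in \Gamma(E')$; this explains why part (ii) requires only one condition, in contrast to the two-pairing criterion used for $E_\circ \perp \nu$ in the preceding theorem, which also needed a test against $\Gamma(ltr(TN))$ to detect the $\Gamma(Rad(TN))$-component.

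For part (i), since $\eta$ vanishes on $E$, the QS-torsion (\ref{eq4.4}) gives $\tilde T^{\tilde D}(X,Y)=0$, so $[X,Y] = \tilde D_X Y - \tilde D_Y X$ for $X, Y \in \Gamma(E)$. Pairing with $\nu$ and using metric compatibility (\ref{eq4.3}) together with (\ref{eq4.6}) yields
\[
\tilde\rho(\tilde D_X Y, \nu) = -\tilde\rho(Y, \tilde D_X \nu) = \tilde\rho(Y, \phi X),
\]
and antisymmetrizing via (\ref{eq2.14}) produces $\tilde\rho([X,Y], \nu) = 2\tilde\rho(\phi X, Y)$. Since $E_\circ$ is non-degenerate I can choose $X \in \Gamma(E_\circ)$ with $\tilde\rho(X,X) \neq 0$; invariance of $E_\circ$ under $\phi$ places $Y := \phi X$ in $\Gamma(E_\circ)$, and (\ref{eq2.13}) gives $\tilde\rho(\phi X, \phi X) = \tilde\rho(X,X) \neq 0$. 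Therefore $\tilde\rho([X,Y], \nu) \neq 0$, and since $E$ does not contain $\nu$, $E$ fails to be integrable.

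For part (ii), taking $X, Y \in \Gamma(E)$ (again $\tilde T^{\tilde D}(X,Y)=0$) and $Z \in \Gamma(E')$ (with $\eta(Z) = 0$), (\ref{eq2.13}) gives $\tilde\rho([X,Y], Z) = \tilde\rho(\phi[X,Y], \phi Z)$. Applying (\ref{eq4.5}) with $\eta(X)=\eta(Y)=0$ collapses the scalar-$\nu$ terms by symmetry of $\tilde\rho$, yielding $\phi[X,Y] = \tilde D_X \phi Y - \tilde D_Y \phi X$. Splitting $\phi Z = TZ + wZ$ via (\ref{eq3.3}) and expanding both $\tilde D_X \phi Y$ and $\tilde D_Y \phi X$ via the Gauss formula (\ref{eq4.7}), the mutual orthogonality of $S(TN)$, $ltr(TN)$, and $S(TN^{\perp})$ eliminates all cross-terms and produces
\[
\tilde\rho([X,Y], Z) = \tilde\rho(D_X \phi Y - D_Y \phi X, TZ) + \tilde\rho(\tilde h^s(X,\phi Y) - \tilde h^s(Y, \phi X), wZ).
\]
Finally I replace $D_X \phi Y$ by its $S(TN)$-component $D'_X \phi Y$ when pairing with $TZ \in S(TN)$, using (\ref{eq4.17}) and noting that the $Rad(TN)$-piece $\tilde h'(X, \phi Y)$ is orthogonal to $TZ$; rearranging then gives the stated criterion.

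The main technical obstacle I foresee is the case $Y \in \Gamma(Rad(TN))$, in which $\phi Y \in \Gamma(Rad(TN))$ rather than $\Gamma(S(TN))$, so (\ref{eq4.17}) decomposes $D_X \phi Y$ as $-\tilde A'_{\phi Y} X + \tilde \nabla'^{t}_X \phi Y$; the symbol $D'_X \phi Y$ in the theorem must then be read uniformly as the $S(TN)$-projection of $D_X \phi Y$. Since $TZ \in S(TN)$ and $Rad(TN) \perp S(TN)$, the pairing $\tilde\rho(D_X \phi Y, TZ)$ in either case depends only on this projection, so the identity remains valid; once the convention is made precise, what remains is routine bookkeeping.
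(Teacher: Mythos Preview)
Your proposal is correct and follows essentially the same route as the paper: for (i) you compute $\tilde\rho([X,Y],\nu)=2\tilde\rho(\phi X,Y)$ exactly as the paper does (the paper's extra $\eta(\tilde D_X\nu)\eta(Y)$-terms vanish since $\eta|_E=0$), and for (ii) you reduce to $\tilde\rho([X,Y],Z)$ for $Z\in\Gamma(E')$ via (\ref{eq2.13}), (\ref{eq4.5}), and the Gauss formula (\ref{eq4.7}), reproducing the paper's identity (\ref{eq4.23}). Your argument is in fact tighter than the paper's in two places: you exhibit an explicit $X,Y\in\Gamma(E_\circ)$ with $\tilde\rho(\phi X,Y)\neq 0$ (the paper simply asserts nonvanishing), and you close the gap between the $D_X\phi Y$ appearing in the computation and the $D'_X\phi Y$ in the statement by observing that $TZ\in\Gamma(S(TN))$ kills the $Rad(TN)$-piece, including the degenerate case $\phi Y\in\Gamma(Rad(TN))$.
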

\begin{proof}Suppose $E$ is integrable then,
\[
\tilde\rho([X,Y],\nu) = 0, \;\;\;\; X,Y\in\Gamma(E)
\]
Now, using metric connection $\tilde D$ and equation (\ref{eq4.6}), we have
\begin{eqnarray}
\tilde\rho([X,Y],\nu)
&=& -\tilde\rho(Y,\tilde D_X\nu) + \tilde\rho(X,\tilde D_Y\nu)\nonumber\\
&=& 2\tilde\rho(Y,\phi X) - \eta(\tilde D_X\nu)\eta(Y) + \eta(\tilde D_Y\nu)\eta(X) 
\end{eqnarray}
which is not zero. So (i) proved.
Also, for $X,Y \in\Gamma(E)$, $Z\in\Gamma(E^{\prime})$
\begin{eqnarray}
\tilde\rho([X,Y],Z)
&=& \tilde\rho(\tilde D_X\phi Y - \tilde D_Y\phi X,\phi Z) \nonumber\\
&=& \tilde\rho(D_X \phi Y - D_Y\phi X, TZ) +\tilde\rho(\tilde h^s(X,\phi Y) - \tilde h^s(Y,\phi X),wZ) \label{eq4.23}
\end{eqnarray}
Hence, the proof is complete.
\end{proof}
\begin{theorem}
Let $N$ be a contact SGL submanifold of  $\tilde{N}$ with a QS metric connection $\tilde{D}$ with structure vector field $\nu$ tangent to $N$. Then 
\begin{enumerate}[(i)]
\item the distribution $E^{\prime}$ is integrable if and only if
\[
2\tilde\rho(Y,\phi X) = \eta(\tilde D_X\nu)\eta(Y) - \eta(\tilde D_Y\nu)\eta(X)
\]
\item the distribution $E^{\prime}\perp \nu$ is integrable if and only if
\[
D_YTZ - D_ZTY - \tilde A_{wZ}Y + \tilde A_{wY}Z 
\]
has no component in $\Gamma(E_{\circ})$ and $\Gamma(Rad(TM))$,
for $Y,Z \in\Gamma(E^{\prime}\perp \nu)$.
\end{enumerate}
\end{theorem}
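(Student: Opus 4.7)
The plan is to derive both parts by pairing $\tilde\rho([Y,Z],\cdot)$ with suitably chosen test vectors and simplifying via the QS Sasakian statistical relations (\ref{eq4.5}) and (\ref{eq4.6}), the torsion formula (\ref{eq4.4}), and the Gauss-Weingarten decompositions (\ref{eq4.7}), (\ref{eq4.12}), (\ref{eq4.13}). For part (i), I would mimic part (i) of the preceding theorem: for $X,Y\in\Gamma(E^{\prime})$, write $[X,Y]=\tilde D_X Y-\tilde D_Y X-\tilde T^{\tilde D}(X,Y)$, pair with $\nu$, transfer the connection onto $\nu$ using $\tilde D\tilde\rho=0$, and substitute (\ref{eq4.6}). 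The computation is algebraically identical to the $E_{\circ}$ case and reproduces the stated identity.

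For part (ii), integrability of $E^{\prime}\perp\nu$ is equivalent to saying that, for all $Y,Z\in\Gamma(E^{\prime}\perp\nu)$, the bracket $[Y,Z]$ has zero projection onto both $E_{\circ}$ and $Rad(TN)$. Since $E_{\circ}$ is non-degenerate and $Rad(TN)$ is paired non-degenerately with $ltr(TN)$, this reduces to checking $\tilde\rho([Y,Z],X)=0$ for all $X\in\Gamma(E_{\circ})$ and $\tilde\rho([Y,Z],N^{\prime})=0$ for all $N^{\prime}\in\Gamma(ltr(TN))$. In both cases $\eta$ of the test vector vanishes, so (\ref{eq2.13}) yields $\tilde\rho(\tilde D_Y Z,X)=\tilde\rho(\phi\tilde D_Y Z,\phi X)$ and the analogous identity with $N^{\prime}$. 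Applying (\ref{eq4.5}) one rewrites $\phi\tilde D_Y Z=\tilde D_Y\phi Z-\tilde\rho(Y,Z)\nu+\eta(Z)Y$, splits $\phi Z=TZ+wZ$, and expands $\tilde D_Y TZ$ via (\ref{eq4.7}) and $\tilde D_Y wZ$ via (\ref{eq4.13}). Using that $\phi X\in E_{\circ}$ is tangential while $\phi N^{\prime}\in ltr(TN)$ (the $\phi$-invariance of $ltr(TN)$ being a consequence of that of $Rad(TN)$), the terms $\tilde h^l(Y,TZ)$, $\tilde h^s(Y,TZ)$, $\tilde\nabla^s_Y wZ$, $\tilde D^l(Y,wZ)$, together with the inhomogeneous contributions $\tilde\rho(Y,Z)\nu$ and $\eta(Z)Y$, all pair trivially with both test vectors by the orthogonality relations among the subbundles in the decomposition of $T\tilde N|_N$. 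Subtracting the analogous expression with $Y,Z$ interchanged, and noting that $\tilde T^{\tilde D}(Y,Z)=\eta(Z)\phi Y-\eta(Y)\phi Z$ also pairs to zero with $X$ and $N^{\prime}$, the whole computation collapses to
\[
\tilde\rho([Y,Z],X)=\tilde\rho(M,\phi X), \qquad \tilde\rho([Y,Z],N^{\prime})=\tilde\rho(M,\phi N^{\prime}),
\]
where $M:=D_Y TZ-D_Z TY-\tilde A_{wZ}Y+\tilde A_{wY}Z$. Since $\phi$ restricts to a bundle automorphism of $E_{\circ}$ and of $ltr(TN)$, vanishing of both right-hand sides is equivalent to $M$ having no $E_{\circ}$ and no $Rad(TN)$ component, which is precisely the stated criterion.

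The principal technical obstacle is the \emph{component bookkeeping}: one has to verify systematically, for each of the second fundamental form, shape operator, and correction pieces produced by (\ref{eq4.5}), (\ref{eq4.7}), (\ref{eq4.13}), (\ref{eq4.4}), that it actually lies in the claimed summand of $T\tilde N|_N=(Rad(TN)\oplus ltr(TN))\perp E_{\circ}\perp E^{\prime}\perp\langle\nu\rangle\perp S(TN^{\perp})$, and then that it is orthogonal to whichever of $\phi X$ or $\phi N^{\prime}$ is currently being tested against. Once this pattern of cancellations is executed carefully, no genuinely new identities are needed and the algebra is routine.
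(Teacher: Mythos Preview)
Your proposal is correct and follows essentially the same route as the paper's own proof. In both parts you pair $\tilde\rho([Y,Z],\cdot)$ with the appropriate test vectors ($\nu$ for (i); $X\in\Gamma(E_{\circ})$ and $N^{\prime}\in\Gamma(ltr(TN))$ for (ii)), invoke the contact identity (\ref{eq2.13})/(\ref{eq2.14}) together with (\ref{eq4.4})--(\ref{eq4.6}) to pass to $\tilde D_Y\phi Z-\tilde D_Z\phi Y$, and then decompose via (\ref{eq3.2}), (\ref{eq4.7}), (\ref{eq4.13}); this is exactly what the paper does, and your additional remarks on why the residual terms pair to zero and why $\phi$ is a bundle automorphism of $E_{\circ}$ and $ltr(TN)$ simply make explicit steps the paper leaves implicit.
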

\begin{proof}
For $X,Y \in\Gamma(E^{\prime})$
\[
\tilde\rho([X,Y],\nu) = 2\tilde\rho(Y,\phi X) - \eta(\tilde D_X\nu)\eta(Y) + \eta(\tilde D_Y\nu)\eta(X)
\]
using the integrability of the distribution $E^{\prime}$ along with the equations (\ref{eq4.4}), (\ref{eq4.6}), (i) holds.\\
Then, for $Y,Z \in\Gamma(E^{\prime}\perp \nu)$, $X\in\Gamma(E_{\circ})$  and  Using equations (\ref{eq4.4}), (\ref{eq4.5}), (\ref{eq2.14})
\[
\tilde\rho([Y,Z],X) = \tilde\rho(\tilde D_Y\phi Z - \tilde D_Z\phi Y, \phi X)
\]
Further, from equations (\ref{eq3.2}), (\ref{eq4.7}), (\ref{eq4.13}) we have
\begin{equation}\label{eq4.27}
\tilde\rho([Y,Z],X) = \rho(D_YTZ - D_ZTY - \tilde A_{wZ}Y + \tilde A_{wY}Z, \phi X)
\end{equation}
Also, $N^{\prime} \in\Gamma(ltr(TN))$
\begin{equation}\label{eq4.28}
\tilde\rho([Y,Z],N^{\prime}) = \rho(D_YTZ - D_ZTY - \tilde A_{wZ}Y + \tilde A_{wY}Z, \phi N^{\prime})
\end{equation}
Hence from the integrability of distribution $E^{\prime}\perp \nu$ and  equations (\ref{eq4.27}) and (\ref{eq4.28}), (ii) holds.
\end{proof}

\begin{theorem}
Let $N$ be a contact SGL submanifold of $\tilde{N}$ with a QS metric connection $\tilde{D}$ with structure vector field $\nu$ tangent to $N$. Then 
\begin{enumerate}[(i)]
\item the distribution $E$ is not parallel
\item the distribution $E\perp \nu$ is parallel if and only if
\[
\tilde \rho (D_XTZ, \phi Y) = \tilde{\rho}(\phi Y, \tilde A_{wz}X)  
\]
and 
\[
\tilde h^l(X,TZ) = - \tilde D^l(X,wZ)
\]
for $X,Z \in\Gamma(E\perp \nu)$.
\end{enumerate}
\end{theorem}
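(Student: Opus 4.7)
My approach mirrors the pattern established in the two preceding theorems of this section: a distribution $F\subset TN$ is parallel with respect to the induced QS connection $D$ precisely when $D_X Y\in\Gamma(F)$ for all admissible $X\in\Gamma(TN)$ and $Y\in\Gamma(F)$, which converts into the vanishing of $\tilde\rho(D_X Y,\cdot)$ against elements of the $\tilde\rho$-complement of $F$ in $TN$. The main tools will be the QS-metric identity (\ref{eq4.6}), the $\phi$-derivation formula (\ref{eq4.5}), and the Gauss-Weingarten equations (\ref{eq4.7}) and (\ref{eq4.13}).

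For part (i), I would argue by contradiction. Assume $E$ is parallel and pick $X,Y\in\Gamma(E)$; since $\nu\notin E$, parallelism forces $\tilde\rho(D_X Y,\nu)=0$. Because $\tilde h^l(X,Y)$ and $\tilde h^s(X,Y)$ in (\ref{eq4.7}) are $\tilde\rho$-orthogonal to $\nu\in\Gamma(TN)$, the desired pairing collapses to $\tilde\rho(\tilde D_X Y,\nu)$. Since $\tilde D$ is metric by (\ref{eq4.3}), I move it across and apply (\ref{eq4.6}) to obtain
\[
\tilde\rho(D_X Y,\nu) \;=\; -\tilde\rho(Y,\tilde D_X\nu) \;=\; \tilde\rho(Y,\phi X) - \eta(\tilde D_X\nu)\eta(Y) \;=\; \tilde\rho(Y,\phi X),
\]
where the last step uses $\eta(Y)=0$. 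Since $E$ is $\phi$-invariant and non-degenerate, one can select $X,Y\in\Gamma(E)$ with $\tilde\rho(Y,\phi X)\neq 0$, delivering the contradiction that establishes (i).

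For part (ii), I would characterize parallelism of $E\perp\nu$ by decomposing $\tilde D_X\phi Z$ through (\ref{eq4.5}) combined with the splitting $\phi Z=TZ+wZ$; then Gauss (\ref{eq4.7}) on $TZ$ and Weingarten (\ref{eq4.13}) on $wZ$ partition $\tilde D_X\phi Z$ into tangential, lightlike-transversal, and screen-transversal pieces. Rearranging yields
\[
\phi\tilde D_X Z \;=\; D_X TZ - \tilde A_{wZ}X + \bigl\{\tilde h^l(X,TZ)+\tilde D^l(X,wZ)\bigr\} + \bigl\{\tilde h^s(X,TZ)+\tilde\nabla^s_X wZ\bigr\} - \tilde\rho(X,Z)\nu + \eta(Z)X.
\]
Pairing the tangential part of the right-hand side with $\phi Y$ for $Y\in\Gamma(E\perp\nu)$ produces the first condition $\tilde\rho(D_X TZ,\phi Y)=\tilde\rho(\phi Y,\tilde A_{wZ}X)$; matching the lightlike-transversal component against elements of $\Gamma(Rad(TN))$ forces $\tilde h^l(X,TZ)=-\tilde D^l(X,wZ)$. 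The converse follows by running the same decomposition backwards: once both identities hold, $D_X Y$ has no component transverse to $E\perp\nu$, so the distribution is parallel.

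The main obstacle will be the careful separation of the tangential, lightlike-transversal, and screen-transversal components in the expansion driven by (\ref{eq4.5}), along with correctly absorbing the $\tilde\rho(X,Z)\nu$ and $\eta(Z)X$ correction terms introduced by the QS structure. Meticulous bookkeeping of which vectors lie in $E$, $E^{\prime}$, $Rad(TN)$, $ltr(TN)$, and $S(TN^{\perp})$ is essential to extract exactly the two stated conditions without spurious extra equations.
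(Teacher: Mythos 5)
Your part (i) is exactly the paper's argument: move $\tilde D$ across the metric, use (\ref{eq4.6}) and $\eta(Y)=0$ to get $\tilde\rho(D_XY,\nu)=\tilde\rho(Y,\phi X)$, and invoke invariance and non-degeneracy (strictly, of $E_{\circ}\subset E$, since $E$ itself contains $Rad(TN)$ and is degenerate) to choose $X,Y$ making this nonzero. Your part (ii) also uses the same ingredients as the paper — (\ref{eq4.5}), the splitting $\phi Z=TZ+wZ$, Gauss (\ref{eq4.7}) and Weingarten (\ref{eq4.13}) — organized vectorially (your displayed identity is essentially Lemma 5.3 of the paper) rather than scalar-wise.

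However, in (ii) there is a concrete gap: you expand $\tilde D_X\phi Z$ and then assert that pairing its tangential part with $\phi Y$ "produces" the first condition and that pairing the $ltr$-part with $Rad(TN)$ "forces" the second, but nothing in your write-up connects these pairings to the parallelism condition, which concerns $D_XY$ for $Y\in\Gamma(E\perp\nu)$, i.e.\ the derivative of the distribution vector, not of $Z$. The missing bridge is precisely the paper's first display in (ii): for $Y\in\Gamma(E\perp\nu)$ and $Z\in\Gamma(E^{\prime})$, one has $\tilde\rho(D_XY,Z)=\tilde\rho(\tilde D_XY,Z)=-\tilde\rho(\phi Y,\tilde D_X\phi Z)$, obtained from (\ref{eq2.13}), the metric property (\ref{eq4.3}) (which transfers the derivative from $Y$ to $\phi Z$) and (\ref{eq4.5}), together with the orthogonality of $E^{\prime}$ to $E\perp\nu$. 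Only with this identity does "parallelism $\Leftrightarrow$ vanishing of the two pairings" hold, and only then does your converse ("run the decomposition backwards") actually follow. Relatedly, you must take $Z\in\Gamma(E^{\prime})$ and not, as the misprinted statement suggests, $Z\in\Gamma(E\perp\nu)$: on the invariant distribution $w$ vanishes, so $wZ=0$ and both conditions trivialize. With $Z\in\Gamma(E^{\prime})$ fixed, the two conditions are separated by letting $\phi Y$ run over $E_{\circ}$ (which is orthogonal to $ltr(TN)$, isolating the tangential equation) and over $Rad(TN)$ (which annihilates the tangential terms and, by the non-degenerate pairing of $Rad(TN)$ with $ltr(TN)$, gives $\tilde h^l(X,TZ)=-\tilde D^l(X,wZ)$). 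Add these steps and your outline becomes the paper's proof.
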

\begin{proof}
For $X, Y \in(E)$, Now, using equation (\ref{eq4.6}) 
\[
\tilde{\rho}(D_XY, V) = \tilde{\rho}(\tilde D_XY, V) =\tilde{\rho}(Y,\phi X)
\]
which is not zero as $E^{\prime}$ is non degenerate. So, (i) holds.\\

Now, using the equations (\ref{eq2.13}), (\ref{eq4.5}), we have
\[
\tilde{\rho}(D_X Y, Z) = \tilde{\rho}(\tilde D_XY,Z) = -\tilde{\rho}(\phi Y,\tilde D_X\phi Z)
\]
From equations (\ref{eq3.2}),(\ref{eq4.7}), we get
\[
\tilde{\rho}(D_X Y, Z) = -\tilde{\rho}(\phi Y, D_XTZ + \tilde h^l(X,TZ) - \tilde A_{wZ}X + \tilde D^l(X,wZ))
\]
Using the parallelism of the distribution $E\perp \nu$, the desired result follows.
\end{proof}
\begin{theorem}
Let $N$ be a contact SGL submanifold of  $\tilde{N}$ with a QS metric connection $\tilde{D}$ with structure vector field $\nu$ tangent to $N$. Then 
\begin{enumerate}[(i)]
\item the distribution $E^{\prime}$ is not parallel
\item the distribution $E^{\prime}\perp \nu$ is parallel if and only if
$D^{\prime}_YTZ- \tilde A_{wZ}Y $
has no component in $\Gamma(E_{\circ})$ and $\Gamma(Rad(TM))$,
for $Y,Z \in\Gamma(E^{\prime}\perp \nu)$.
\end{enumerate}
\end{theorem}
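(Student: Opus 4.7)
My plan is to mirror the structure of the preceding theorem's proof but for $E^{\prime}\perp\nu$ rather than $E\perp\nu$, exploiting equation (\ref{eq3.3}) which describes how $\phi$ acts on $E^{\prime}$.

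For part (i), I would fix $Y,Z\in\Gamma(E^{\prime})$ and test whether $D_YZ$ stays in $E^{\prime}$ by computing its $\nu$--component. Metricity of $\tilde D$ in (\ref{eq4.3}) together with $\tilde\rho(Z,\nu)=0$ gives $\tilde\rho(\tilde D_YZ,\nu)=-\tilde\rho(Z,\tilde D_Y\nu)$. Substituting (\ref{eq4.6}) and using $\eta(Z)=0$ reduces this to $\tilde\rho(Z,\phi Y)$, which by (\ref{eq3.3}) equals $\tilde\rho(Z,TY)$ because $wY\in\Gamma(S(TN^{\perp}))$ is orthogonal to the tangent vector $Z$. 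Since $TY\in\Gamma(E^{\prime})$ and $E^{\prime}$ is non-degenerate, this inner product cannot vanish identically, so (i) follows.

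For part (ii), I would reformulate parallelism of $E^{\prime}\perp\nu$ as the vanishing of $\tilde\rho(D_YZ,X)$ and $\tilde\rho(D_YZ,N^{\prime})$ for every $X\in\Gamma(E_{\circ})$ and $N^{\prime}\in\Gamma(ltr(TN))$, with $Y,Z\in\Gamma(E^{\prime}\perp\nu)$. The core computation applies (\ref{eq2.13}) to rewrite $\tilde\rho(\tilde D_YZ,X)=\tilde\rho(\phi\tilde D_YZ,\phi X)$, then uses (\ref{eq4.5}) to replace $\phi\tilde D_YZ=\tilde D_Y\phi Z-\tilde\rho(Y,Z)\nu+\eta(Z)Y$. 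Expanding $\phi Z=TZ+wZ$ via (\ref{eq3.3}) and substituting the Gauss--Weingarten formulae (\ref{eq4.7}) and (\ref{eq4.13}), only the tangential combination $D_YTZ-\tilde A_{wZ}Y$ survives pairing with the tangent vector $\phi X$; the stray term $\eta(Z)\tilde\rho(Y,\phi X)$ vanishes since $\phi X\in E_{\circ}$ is orthogonal to both $E^{\prime}$ and $\nu$. An analogous computation with $N^{\prime}$ in place of $X$, using $\phi N^{\prime}\in\Gamma(ltr(TN))$ inherited from $\phi(Rad(TN))=Rad(TN)$, delivers the $Rad(TN)$--condition.

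The main obstacle I anticipate is verifying the $\phi$--invariance of $E_{\circ}$, which is not stated explicitly in Definition \ref{def31} but is needed to promote $\tilde\rho(D^{\prime}_YTZ-\tilde A_{wZ}Y,\phi X)=0$ (valid for all $X\in E_{\circ}$) into the statement that this vector has no $E_{\circ}$--component via non-degeneracy. This invariance follows from (\ref{eq2.14}): for $X=\phi W\in E_{\circ}$ with $W\in S(TN)$, one computes $\phi X=-W+\eta(W)\nu\in S(TN)\cap\phi(S(TN))=E_{\circ}$. Once that is settled, the remaining work is the routine tangential/transversal bookkeeping and the $\nu$--piece corrections coming from $Y,Z\in E^{\prime}\perp\nu$.
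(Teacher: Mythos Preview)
Your proposal is correct and follows essentially the same route as the paper: compute the $\nu$--component of $D_YZ$ via (\ref{eq4.6}) for part (i), and for part (ii) pass from $\tilde\rho(\tilde D_YZ,X)$ to $\tilde\rho(\tilde D_Y\phi Z,\phi X)$ using (\ref{eq2.13}) and (\ref{eq4.5}), then expand with (\ref{eq3.3}), (\ref{eq4.7}), (\ref{eq4.13}). Your added justification of the $\phi$--invariance of $E_{\circ}$ and the non-vanishing of $\tilde\rho(Z,TY)$ via non-degeneracy of $E^{\prime}$ fills in details the paper leaves implicit, but the underlying argument is identical.
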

\begin{proof}
Suppose $E^{\prime}$ be a parallel distribution. Then,  we have
\[
\tilde\rho(D_YZ, V) = 0,
\]
for $Y,Z \in\Gamma(E^{\prime})$. Now, using equation (\ref{eq4.6}) 
\[
\tilde\rho(D_YZ, V) = \tilde\rho(\tilde D_YZ, V)=\tilde\rho(Z,\phi Y)
\]
which is not zero. This is a contradiction as $E^{\prime}$ is non-degenerate. So (i) holds.
Also, $E^{\prime}\perp \nu$ be a  parallel distribution. Then, for $Y,Z \in\Gamma(E^{\prime}\perp \nu)$, $D_YZ \in\Gamma(E^{\prime}\perp \nu)$. Hence we have, 
\[
\tilde\rho(D_YZ, X) =\tilde\rho(D_YZ, N)= 0,
\]
for $X\in\Gamma(E_{\circ})$ and $N\in\Gamma(ltr(TM))$.Using equations (\ref{eq2.13}),  (\ref{eq4.5}), (\ref{eq3.3}), we have
\begin{eqnarray}
\tilde\rho(D_YZ,X) &=&\tilde\rho(\tilde D_YZ,X)=\tilde\rho(\tilde D_Y\phi Z -\tilde\rho(Y,Z)V +\eta(Z)Y, \phi X) \nonumber\\
&=& \tilde(D_YTZ-\tilde A_{wZ}Y, \phi X) \label{eq4.29}
\end{eqnarray}
Also
\begin{equation}\label{eq4.30}
\tilde\rho(D_YZ,N)= \tilde(D_YTZ-\tilde A_{wZ}Y, \phi N)
\end{equation}
The result follows from (\ref{eq4.29}), (\ref{eq4.30}).
\end{proof}

\section{Geodesic SGL Submanifolds}
\begin{definition}
  ``A SGL submanifold of $\tilde{N}$ with a QS metric connection is said to be a $E$- geodesic  if $\tilde h(X,Y) = 0$  for \;  $X, Y \in \Gamma(E)$. \\
  
 $N$ is  said to be $E$-geodesic if $\tilde h^l(X,Y) = 0$, $\tilde h^s(X,Y) = 0$ for any $X, Y \in \Gamma(E).$ \\
  
  Also, $N$ is said to be mixed geodesic if $\tilde h(X,Y) = 0$,  for any $X \in \Gamma(E)$ and $Y \in \Gamma(E^{\prime}\perp\nu)$.  "
  \end{definition}

\begin{theorem}
Let $N$ be a SGL submanifold of  $\tilde{N}$ with a QS metric connection $\tilde{D}$. If the distribution $E\perp\nu$ defines totally geodesic foliation in $\tilde{N}$  if and only if $N$ is $E\perp\nu$-geodesic and $E\perp\nu$ is parallel respect to $D$ on $N$.
\end{theorem}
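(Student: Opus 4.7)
The plan is to reduce the statement to a direct application of the Gauss formula (\ref{eq4.7}) together with the uniqueness of the decomposition $T\tilde{N}|_N = TN \oplus tr(TN) = \bigl(Rad(TN) \oplus ltr(TN)\bigr) \perp S(TN) \perp S(TN^{\perp})$. Recall that a distribution $\mathcal{E}$ on $N$ defines a totally geodesic foliation in the ambient manifold $\tilde{N}$ precisely when $\tilde{D}_X Y \in \Gamma(\mathcal{E})$ for all $X,Y \in \Gamma(\mathcal{E})$. I will apply this with $\mathcal{E} = E\perp \nu \subset TN$.

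First I would fix $X,Y \in \Gamma(E \perp \nu)$ and write
\begin{equation*}
\tilde{D}_X Y \;=\; D_X Y + \tilde h^l(X,Y) + \tilde h^s(X,Y),
\end{equation*}
where by (\ref{eq4.7}) the three summands lie in the mutually complementary bundles $TN$, $ltr(TN)$, and $S(TN^\perp)$ respectively. Since $E \perp \nu \subset TN$, the requirement $\tilde{D}_X Y \in \Gamma(E \perp \nu)$ forces each component separately: the two transversal components must vanish, and the tangential part $D_X Y$ must itself belong to $\Gamma(E \perp \nu)$.

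Next I would translate these two requirements into the language of the theorem. The vanishing of $\tilde h^l(X,Y)$ and $\tilde h^s(X,Y)$ for all $X,Y \in \Gamma(E \perp \nu)$ is exactly the definition of $N$ being $E\perp \nu$-geodesic. The condition $D_X Y \in \Gamma(E \perp \nu)$ for all such $X,Y$ is, by definition, the parallelism of $E\perp \nu$ with respect to the induced connection $D$ on $N$ (equivalently, the $E^{\prime}$-component of $D_X Y$ vanishes). This establishes the forward direction.

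For the converse, assuming the two hypotheses the Gauss decomposition collapses to $\tilde{D}_X Y = D_X Y \in \Gamma(E\perp \nu)$, so $E\perp \nu$ is stable under $\tilde{D}$ on its sections; in particular the distribution is integrable and defines a totally geodesic foliation in $\tilde N$. The argument is essentially formal, so no step is truly an obstacle; the only point requiring care is to make sure that one is reading off the \emph{tangential} component of $\tilde D_X Y$ with respect to the decomposition $TN = E \oplus E^{\prime} \oplus \langle \nu \rangle$ when identifying the parallelism of $E \perp \nu$, rather than confusing it with the screen/radical decomposition used for the transversal second fundamental forms.
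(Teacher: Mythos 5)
Your argument is correct and is essentially the proof the paper gives: the paper isolates the same three components of $\tilde D_X Y$ (the $ltr(TN)$-, $S(TN^{\perp})$-, and $E^{\prime}$-parts) by pairing with $\xi\in\Gamma(Rad(TN))$, $W\in\Gamma(S(TN^{\perp}))$ and $Z\in\Gamma(E^{\prime})$, which is just the metric-dual phrasing of your direct-sum decomposition of the Gauss formula (\ref{eq4.7}). Both reduce the statement to $\tilde h^l=\tilde h^s=0$ together with $D_XY\in\Gamma(E\perp\nu)$, so no further comparison is needed.
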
 
\begin{proof}
We know that $E\perp\nu$ defines a totally geodesic foliations in $\tilde{N}$ if and only if 
\[
 \tilde \rho(\tilde D_XY,\xi) = \tilde \rho(\tilde D_XY,W) =\tilde\rho(\tilde D_XY,Z)= 0
 \] 
for $X,Y \in \Gamma(E\perp\nu)$, $\xi \in \Gamma Rad(TN)$, $Z \in \Gamma(E^{\prime})$ and $W\in \Gamma(S(TN^{\perp}))$.
\begin{equation}\label{eq5.1}
\tilde \rho(\tilde D_XY,\xi) = \tilde \rho(\tilde h^l(X,Y),\xi)
\end{equation}
Also
\begin{equation}\label{eq5.2}
\tilde \rho(\tilde D_XY,W) = \tilde\rho(\tilde h^s(X,Y), W) 
\end{equation}
Now
\begin{equation}\label{eq5.3}
\tilde\rho(\tilde D_XY,Z) = \tilde\rho(D_XY, Z)
\end{equation}
From equations (\ref{eq5.1}),(\ref{eq5.2}) and (\ref{eq5.3}), the proof is complete. 
\end{proof}

\begin{theorem}
Let $N$ be a SGL submanifold of $\tilde{N}$ with a QS metric connection $\tilde{D}$. If $N$ is mixed geodesic if and only if the following satisfy:
\begin{enumerate}
\item  $\tilde h^l(X,T Z) =- D^l(X,wZ)$,
\item $\tilde\rho(\tilde A_{wZ}X - D_XT Z, BW) = \tilde\rho(\tilde h^s(X,T Z) + \tilde\nabla^s_XwZ, CW)$
\end{enumerate}
for $X \in \Gamma(E)$,$Z \in \Gamma(E^{\prime}\perp\nu)$ and $W\in \Gamma(S(TN^{\perp}))$.
\end{theorem}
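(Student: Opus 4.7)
The plan is to derive a single master identity by applying $\tilde D_X$ to $\phi Z$ via the Sasakian statistical relation (\ref{eq4.5}), and then extract conditions (1) and (2) as its $ltr(TN)$- and $S(TN^{\perp})$-components. First I reduce the range of $Z$: for $X \in \Gamma(E)$ and $Z = Z' + a\nu \in \Gamma(E' \perp \nu)$ with $Z' \in \Gamma(E')$, equation (\ref{eq4.6}) combined with the $\phi$-invariance of $E = E_\circ \perp Rad(TN)$ shows $\tilde D_X \nu \in \Gamma(TN)$, so $\tilde h(X,\nu) = 0$ automatically and it suffices to test $Z \in \Gamma(E')$. For such $Z$ one has $\tilde\rho(X,Z) = 0$ and $\eta(Z) = 0$, so (\ref{eq4.5}) collapses to $\tilde D_X \phi Z = \phi\,\tilde D_X Z$. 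Expanding $\phi Z = TZ + wZ$ and using (\ref{eq4.7}) and (\ref{eq4.13}) on the left and (\ref{eq4.7}) on the right yields the master identity
\[
D_X TZ + \tilde h^l(X,TZ) + \tilde h^s(X,TZ) - \tilde A_{wZ}X + \tilde\nabla^s_X wZ + \tilde D^l(X,wZ) = \phi\bigl(D_X Z + \tilde h^l(X,Z) + \tilde h^s(X,Z)\bigr),
\]
which I denote $(\star)$.

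The forward direction is then immediate. Assuming $\tilde h^l(X,Z) = 0 = \tilde h^s(X,Z)$, the right side of $(\star)$ collapses to $TD_XZ + wD_XZ$. Pairing $(\star)$ with $\xi \in \Gamma(Rad(TN))$ eliminates every $\Gamma(TN)$- and $\Gamma(S(TN^\perp))$-term (since $\tilde\rho(\xi,\cdot) = 0$ on both), leaving $\tilde\rho(\tilde h^l(X,TZ) + \tilde D^l(X,wZ),\xi) = 0$ for all $\xi$; the nondegenerate duality between $Rad(TN)$ and $ltr(TN)$ then gives (1). For (2) I pair $(\star)$ with $\phi W = BW + CW$ for $W \in \Gamma(S(TN^\perp))$, where the SGL structure forces $BW \in \Gamma(S(TN))$ and $CW \in \Gamma(S(TN^\perp))$. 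The right side evaluates, via (\ref{eq2.13}) and $\eta(W) = 0$, to $\tilde\rho(\phi D_X Z, \phi W) = \tilde\rho(D_X Z, W) = 0$; the left side collapses by bundle orthogonalities to $\tilde\rho(D_X TZ - \tilde A_{wZ}X, BW) + \tilde\rho(\tilde h^s(X,TZ) + \tilde\nabla^s_X wZ, CW)$, and setting this to zero gives (2).

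For the converse I apply the same two pairings to the \emph{unsimplified} $(\star)$. Pairing with $\xi$ and using (\ref{eq2.14}) together with $\phi(Rad(TN)) = Rad(TN)$ kills the $\phi D_X Z$ and $\phi \tilde h^s(X,Z)$ contributions on the right (they pair with $\phi\xi \in \Gamma(Rad(TN))$, which is orthogonal to $\Gamma(TN)$ and $\Gamma(S(TN^\perp))$), yielding
\[
\tilde\rho\bigl(\tilde h^l(X,TZ) + \tilde D^l(X,wZ),\,\xi\bigr) = -\tilde\rho\bigl(\tilde h^l(X,Z),\,\phi\xi\bigr).
\]
Condition (1) kills the left side for every $\xi$; since $\phi$ is injective on $ltr(TN)$ (because $\nu$ is tangent to $N$ and thus $\nu \notin ltr(TN)$) and $\phi\xi$ sweeps $Rad(TN)$, the duality forces $\tilde h^l(X,Z) = 0$. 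Similarly, pairing $(\star)$ with $\phi W$ and using the previous vanishing reduces the right side to $\tilde\rho(\tilde h^s(X,Z), W)$, while condition (2) forces the left side to vanish; varying $W$ and using the nondegeneracy of $S(TN^\perp)$ yields $\tilde h^s(X,Z) = 0$, so $N$ is mixed geodesic.

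The main obstacle is the bookkeeping inside these pairings: one must invoke the SGL structural facts $\phi(ltr(TN)) \subseteq ltr(TN)$ and $\phi(S(TN^\perp)) \subseteq S(TN) \oplus S(TN^\perp)$ so that cross-pairings between disparate bundle components drop out cleanly, and then verify that (1) and (2) are each \emph{individually} equivalent to the vanishing of one of $\tilde h^l(X,Z)$ or $\tilde h^s(X,Z)$ rather than only jointly equivalent to some combination.
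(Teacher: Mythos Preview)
Your proof is correct and follows essentially the same route as the paper's: compute $\tilde\rho(\tilde D_XZ,\xi)$ and $\tilde\rho(\tilde D_XZ,W)$ in two ways---once directly via (\ref{eq4.7}) to see the forms $\tilde h^l(X,Z)$, $\tilde h^s(X,Z)$, and once via (\ref{eq2.13})--(\ref{eq4.5}) to convert to $\tilde D_X\phi Z$ and pick up $\tilde h^l(X,TZ)+\tilde D^l(X,wZ)$ and the $BW$/$CW$ pairings. Your master identity $(\star)$ is just the full expansion of this correspondence, and your pairings with $\xi$ and $\phi W$ reproduce exactly the paper's two computations.

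Two brief remarks. First, your reduction to $Z\in\Gamma(E')$ and the claim $\tilde\rho(X,Z)=0$ rely on $E_\circ$ and $E'$ being orthogonal; the paper's decomposition $S(TN)=E_\circ\oplus E'\perp\nu$ uses $\oplus$ rather than $\perp$, so this is not stated. It is harmless here because the resulting $\tilde\rho(X,Z)\nu$ and $\eta(Z)X$ terms are tangential and hence vanish in every pairing you actually perform, which is precisely how the paper handles them. Second, your treatment of the converse is genuinely more complete than the paper's: the paper exhibits only the forward computation and leaves the equivalence implicit, whereas you verify explicitly that conditions (1) and (2) individually force $\tilde h^l(X,Z)=0$ and $\tilde h^s(X,Z)=0$ by invoking the bijectivity of $\phi$ on $Rad(TN)$ and the nondegeneracy of $S(TN^\perp)$. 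The structural facts you flag at the end ($\phi(ltr(TN))\subseteq ltr(TN)$, $B W\in\Gamma(S(TN))$, $CW\in\Gamma(S(TN^\perp))$) are exactly what the paper uses tacitly in passing from (\ref{eq5.4}) to the stated condition (2).
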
 
\begin{proof} For  $X \in \Gamma(E)$, $Z \in \Gamma(E^{\prime}\perp\nu)$ and $\xi \in \Gamma (Rad(TN))$
\[
\tilde\rho(\tilde D_XZ,\xi)=0
\]
using equation (\ref{eq4.7}) along with the mixed geodesicity  of $N$.
From the equations (\ref{eq2.13}),(\ref{eq4.5}),(\ref{eq4.7}), we have
\begin{eqnarray}
\tilde\rho(\tilde D_XZ,\xi)&=& \tilde\rho(\phi\tilde D_XZ,\phi\xi)+\eta(\tilde D_XZ)\eta(\xi)\nonumber\\ &=&\tilde\rho(\tilde D_X\phi Z-\tilde\rho(X,Z)\nu+\eta(Z)X,\phi \xi)+ \eta(\tilde D_XZ)\eta(\xi)\nonumber\\ &=& \tilde\rho(\tilde h^l(X,TZ)+ \tilde D^l(X,wZ),\xi) \nonumber
\end{eqnarray}
result (i) holds.

Also,
\[
\tilde\rho(\tilde D_XZ, W) = \tilde\rho(\phi\tilde D_XZ,\phi W)+\eta(\tilde D_XZ)\eta(W)
\]
\begin{equation}\label{eq5.4}
\tilde\rho(\tilde D_XZ, W) = \tilde\rho(-\tilde A_{wZ}X + D_X T Z + \tilde h^s(X,T Z) + \tilde\nabla^s_XwZ, BW + CW)
\end{equation}
The result follows from the hypothesis along with equation (\ref{eq5.4}).
\end{proof}
\begin{lemma}
Let $N$ be a SGL submanifold of $\tilde{N}$ with a QS metric connection $\tilde{D}$. Then
\begin{equation}\label{eq5.5}
D_XT Y - \tilde A_{wY}X = T D_XY + B\tilde h^s(X,Y) -\eta(Y)X +\rho(X,Y)\nu
\end{equation}
\begin{equation}\label{eq5.6}
\tilde h^l(X,T Y) + \tilde D^l(X,wY) = C\tilde h^l(X,Y)
\end{equation}
\begin{equation}\label{eq5.7}
\tilde h^s(X,T Y) +\tilde\nabla^s_XwY = wD_XY +C\tilde h^s(X,Y)
\end{equation}
for $X,Y \in \Gamma(TN)$.
\end{lemma}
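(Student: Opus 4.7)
The plan is to apply the Sasakian-statistical identity (\ref{eq4.5}), rearranged as
\[
\tilde D_X \phi Y = \phi\tilde D_X Y + \tilde\rho(X,Y)\nu - \eta(Y)X,
\]
then expand each side using the tangential/transversal splittings $\phi Y = TY + wY$ from (\ref{eq3.2}) and $\phi V = BV + CV$ from (\ref{eq3.4}), combined with the Gauss formula (\ref{eq4.7}) and the Weingarten formula (\ref{eq4.13}) for the QS metric connection $\tilde D$. The three asserted identities will then fall out by projecting this single vector equation onto $\Gamma(TN)$, $\Gamma(ltr(TN))$, and $\Gamma(S(TN^{\perp}))$.

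For the left-hand side, with $\phi Y = TY + wY$, apply (\ref{eq4.7}) to $\tilde D_X TY$ (tangential argument) and (\ref{eq4.13}) to $\tilde D_X wY$ (screen-transversal argument) to obtain
\begin{equation*}
\tilde D_X\phi Y = \bigl(D_X TY - \tilde A_{wY}X\bigr) + \bigl(\tilde h^l(X,TY) + \tilde D^l(X,wY)\bigr) + \bigl(\tilde h^s(X,TY) + \tilde\nabla^s_X wY\bigr),
\end{equation*}
whose three brackets are respectively the tangential, lightlike-transversal, and screen-transversal pieces.

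For the right-hand side, apply (\ref{eq4.7}) inside $\phi\tilde D_X Y$, then (\ref{eq3.2}) on the tangent summand and (\ref{eq3.4}) on the two transversal summands, yielding
\begin{equation*}
\phi\tilde D_X Y = TD_XY + wD_XY + B\tilde h^l(X,Y) + C\tilde h^l(X,Y) + B\tilde h^s(X,Y) + C\tilde h^s(X,Y).
\end{equation*}
Substituting both expansions into (\ref{eq4.5}) and matching components across $\Gamma(TN)$, $\Gamma(ltr(TN))$, and $\Gamma(S(TN^{\perp}))$ (after absorbing $\tilde\rho(X,Y)\nu - \eta(Y)X$ into the tangential equation) reads off (\ref{eq5.5}), (\ref{eq5.6}), and (\ref{eq5.7}) in order.

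The main obstacle is bookkeeping: knowing which $B$- and $C$-pieces land in which subbundle. The critical observation is that $\phi$-invariance of $Rad(TN)$ from Definition~\ref{def31}(1) propagates to $\phi$-invariance of $ltr(TN)$, verified via $\tilde\rho(\phi N'_i, \phi\xi_j) = \delta_{ij}$ together with the dual-basis pairing relations; this forces $B\tilde h^l(X,Y) = 0$ and $C\tilde h^l(X,Y) \in \Gamma(ltr(TN))$, which is exactly what (\ref{eq5.6}) requires. Likewise, the decomposition in Definition~\ref{def31} forces $B\tilde h^s(X,Y) \in \Gamma(E')$ and $C\tilde h^s(X,Y) \in \Gamma(S(TN^{\perp}))$, so the three projections separate cleanly and yield the stated formulas with no residual cross-terms.
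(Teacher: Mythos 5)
Your proposal is correct and follows essentially the same route as the paper: apply $\tilde D_X$ to $\phi Y = TY + wY$, invoke (\ref{eq4.5}), expand both sides via the Gauss and Weingarten formulas for $\tilde D$, and compare tangential, lightlike-transversal and screen-transversal parts. Your explicit justification that $B\tilde h^l(X,Y)=0$ via the $\phi$-invariance of $ltr(TN)$ merely spells out a step the paper leaves implicit.
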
  
\begin{proof}
Differentiating (\ref{eq3.2}) with respect to $Y\in\Gamma(TN)$ and using equations (\ref{eq4.5}), (\ref{eq4.6}), we have
\begin{eqnarray}
D_XT Y &+& \tilde h^l(X,T Y) + \tilde h^s(X,T Y) - \tilde A_{wY}X + \tilde\nabla^s_XwY + \tilde D^l(X,wY)\nonumber \\ &=& T D_XY + wD_XY + C\tilde h^l(X,Y) + B\tilde h^s(X,Y) + C\tilde h^s(X,Y)\nonumber\\ &-&\eta(Y)X +\rho(X,Y)\nu \nonumber
\end{eqnarray}
By comparing tangential, lightlike transversal and screen transversal parts, we get (\ref{eq5.5}), (\ref{eq5.6}) and (\ref{eq5.7}). This completes the proof.
\end{proof}
\begin{theorem}
Let $N$ be a SGL submanifold of $\tilde{N}$ with a QS metric connection $\tilde{D}$. If $N$ is mixed geodesic if and only if the following satisfies:
\begin{enumerate}
\item $\tilde h^l(X,T Z)= -D^l(X,wZ)$
\item $w(\tilde A_{wZ}X - D_XT Z) = C(\tilde h^s(X,T Z) + \tilde\nabla^s_XwZ)$
\end{enumerate}
for $X \in \Gamma(E)$, $Z \in \Gamma(E^{\prime}\perp \nu)$.
\end{theorem}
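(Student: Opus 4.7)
The plan is to leverage the structural identities collected in Lemma 5.4 (equations (\ref{eq5.5})--(\ref{eq5.7})), substitute the appropriate arguments, and then project onto the relevant subbundles using the decomposition of $\phi$.

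\textbf{Preparation.} Before addressing either implication, I would record a few algebraic consequences of $\phi^{2}=-I+\eta\otimes\nu$ together with the splittings $\phi|_{\Gamma(TN)}=T+w$ and $\phi|_{\Gamma(tr(TN))}=B+C$. Comparing tangential and transversal parts of $\phi^{2}Y$ for $Y\in\Gamma(TN)$ yields $w\circ T+C\circ w=0$, and doing the same for $\phi^{2}V$, $V\in\Gamma(tr(TN))$, gives $w\circ B+C^{2}=-I$ (the $\eta$-terms drop because $\nu\in\Gamma(TN)$, so $\eta$ vanishes on $tr(TN)$). I would also note the bundle facts $wX=0$ for $X\in\Gamma(E)$ (since $E$ is $\phi$-invariant), $w\nu=0$, and $\eta(Z)=0$ for $Z\in\Gamma(E^{\prime})$ (since $E^{\prime}\perp\nu$).

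\textbf{Forward direction.} Assume $N$ is mixed geodesic, i.e. $\tilde h^{l}(X,Z)=0=\tilde h^{s}(X,Z)$ for $X\in\Gamma(E)$, $Z\in\Gamma(E^{\prime}\perp\nu)$. The case $Z=\nu$ is trivial from (\ref{eq4.6})--(\ref{eq4.7}), since $\tilde D_{X}\nu=-\phi X+\eta(\tilde D_{X}\nu)\nu$ is already tangential for $X\in\Gamma(E)$, so I may restrict to $Z\in\Gamma(E^{\prime})$. Substituting into (\ref{eq5.6}) immediately delivers condition (1). Substituting into (\ref{eq5.5}) (killing the $B\tilde h^{s}$-term and the $\eta(Z)X$-term) and then applying $w$, using $wX=w\nu=0$ and $w\circ T=-C\circ w$, I obtain $w(\tilde A_{wZ}X-D_{X}TZ)=Cw D_{X}Z$. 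Substituting into (\ref{eq5.7}) and applying $C$ yields $C(\tilde h^{s}(X,TZ)+\tilde\nabla^{s}_{X}wZ)=Cw D_{X}Z$. Equating these two expressions produces condition (2).

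\textbf{Converse direction.} Assume (1) and (2). Feeding (1) into (\ref{eq5.6}) forces $C\tilde h^{l}(X,Z)=0$. Because $ltr(TN)$ is $\phi$-invariant, $B$ vanishes on it and $C$ acts as $\phi$; moreover $\eta$ vanishes on $ltr(TN)$, so applying $\phi$ once more and using $\phi^{2}=-I$ on $ltr(TN)$ gives $\tilde h^{l}(X,Z)=0$. For the screen part, I apply $w$ to (\ref{eq5.5}) and $C$ to (\ref{eq5.7}) without any geodesicity assumption, obtaining respectively $w(\tilde A_{wZ}X-D_{X}TZ)=CwD_{X}Z-wB\tilde h^{s}(X,Z)$ and $C(\tilde h^{s}(X,TZ)+\tilde\nabla^{s}_{X}wZ)=CwD_{X}Z+C^{2}\tilde h^{s}(X,Z)$. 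Hypothesis (2) forces these to be equal, which simplifies to $wB\tilde h^{s}(X,Z)+C^{2}\tilde h^{s}(X,Z)=0$; invoking the identity $w\circ B+C^{2}=-I$ on $S(TN^{\perp})$ then yields $\tilde h^{s}(X,Z)=0$, completing mixed geodesicity.

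\textbf{Expected obstacle.} The proof itself is largely bookkeeping; the only subtle point is making sure the algebraic identities for $\phi$ are applied on the correct subbundle (tangent, lightlike transversal, or screen transversal), since the $\eta$-corrections appear or disappear depending on whether $\nu$ lies in the bundle in question. Once the two identities $w\circ T+C\circ w=0$ on $\Gamma(TN)$ and $w\circ B+C^{2}=-I$ on $\Gamma(tr(TN))$ are in hand, both directions reduce to substitution into (\ref{eq5.5})--(\ref{eq5.7}).
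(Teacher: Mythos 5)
Your proposal is correct and follows essentially the same route as the paper: both arguments come down to expanding $\tilde D_X\phi Z$ for $\phi Z=TZ+wZ$ via the Gauss--Weingarten formulae, applying $\phi$ (equivalently, invoking Lemma 5.4), and comparing the lightlike-transversal and screen-transversal components. Your treatment is in fact somewhat more complete than the paper's terse ``comparing tangential and transversal part, result holds,'' since you make explicit the identities $w\circ T+C\circ w=0$ and $w\circ B+C^{2}=-\mathrm{id}$ and the invariance of $ltr(TN)$ needed to recover $\tilde h^{l}(X,Z)=0$ and $\tilde h^{s}(X,Z)=0$ in the converse direction.
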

\begin{proof}
From equations (\ref{eq2.14}) and (\ref{eq4.5}), we have
\begin{eqnarray}
\phi(D_X\phi Z) = -D_XZ+\eta(D_XZ)\nu-\tilde h^l(X,Z)-\tilde h^s(X,Z)-\eta(Z)\phi X\nonumber
\end{eqnarray}
Using equation (\ref{eq3.3}),(\ref{eq4.7}), (\ref{eq4.13}), we get
\begin{eqnarray}
-\tilde h^l(X,Z)-\tilde h^s(X,Z)&=& -wA_{wZ}X + w D_XT Z + C\tilde h^l(X,T Z) + C\tilde h^s(X,T Z) \nonumber\\ &&+ C\tilde\nabla^s_XwZ + C\tilde D^l(X,wZ)  \nonumber
\end{eqnarray}
From the mixed geodesicity of $N$ and comparing tangential and transversal part, result holds.
\end{proof}

\begin{theorem}
Let $N$ be a SGL submanifold of $\tilde{N}$ with a QS metric connection $\tilde{D}$. Then for $X \in \Gamma(E_{\circ})$, $Z \in \Gamma(E^{\prime})$, we have
\[
D_XZ = -T D_XT Z + T \tilde A_{wZ}X - B\tilde h^s(X,T Z) - B\tilde\nabla^s_XwZ + \eta(Z)\phi X + \eta(D_XZ)\nu
\]
\end{theorem}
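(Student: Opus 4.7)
The plan is to isolate $\tilde D_X Z$ by applying the Sasakian statistical identity (\ref{eq4.5}) to $\phi Z$ and then reading off the tangential component. Equation (\ref{eq4.5}) gives
\[
\tilde D_X(\phi Z) = \phi\,\tilde D_X Z + \tilde\rho(X,Z)\nu - \eta(Z)X,
\]
and applying $\phi$ to both sides, using $\phi^{2}W = -W + \eta(W)\nu$ together with $\phi\nu = 0$, yields after rearranging
\[
\tilde D_X Z = -\phi\,\tilde D_X(\phi Z) + \eta(\tilde D_X Z)\nu - \eta(Z)\phi X.
\]
This is the same mechanism used in the proof immediately preceding the statement, only now I solve for $\tilde D_X Z$ instead of for $\phi(\tilde D_X\phi Z)$.

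Next I would expand $\phi Z = TZ + wZ$ via (\ref{eq3.3}) and apply the Gauss--Weingarten formulae for the QS metric connection, (\ref{eq4.7}) and (\ref{eq4.13}):
\[
\tilde D_X(\phi Z) = D_X TZ - \tilde A_{wZ}X + \tilde h^l(X,TZ) + \tilde h^s(X,TZ) + \tilde\nabla^s_X wZ + \tilde D^l(X,wZ).
\]
Applying $\phi$ term by term, using $\phi Y = TY + wY$ on tangent vectors (\ref{eq3.2}) and $\phi V = BV + CV$ on transversal vectors (\ref{eq3.4}), and then projecting on the tangent bundle of $N$ recovers $D_X Z$ on the left (since $\tilde D_X Z = D_X Z + \tilde h^l(X,Z) + \tilde h^s(X,Z)$). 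Because $X \in E_{\circ}$ and $E_{\circ}$ is $\phi$-invariant, $\phi X = TX$ is tangential, and $\eta(\tilde D_X Z) = \eta(D_X Z)$ since $\eta$ annihilates $tr(TN)$.

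The residual terms that surface in the tangential projection are $-B\tilde h^l(X,TZ)$ and $-B\tilde D^l(X,wZ)$; these are killed by the $\phi$-invariance of $ltr(TN)$, which for contact SGL submanifolds follows from the invariance of $Rad(TN)$ built into Definition \ref{def31} together with the duality $\tilde\rho(N',\xi_j) = \delta_{ij}$ and the compatibility $\tilde\rho(\phi\cdot,\phi\cdot) = \tilde\rho(\cdot,\cdot) - \eta\otimes\eta$. The term $\eta(Z)\phi X$ vanishes identically because the splitting $S(TN) = E_{\circ}\oplus E^{\prime}\perp\nu$ forces $\eta(Z) = 0$ whenever $Z\in\Gamma(E^{\prime})$, so any sign ambiguity in that term is immaterial. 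Collecting the surviving tangential pieces then gives the asserted identity. The main obstacle is precisely the invariance $\phi(ltr(TN)) \subseteq ltr(TN)$; if it is not taken for granted from the definition of a contact SGL submanifold, I would insert it as a preliminary lemma, derived by computing $\tilde\rho(\phi N',W)$ and $\tilde\rho(\phi N', N'')$ against sections of $S(TN^{\perp})$ and $ltr(TN)$ via the above compatibility.
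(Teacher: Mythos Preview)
Your proposal is correct and follows essentially the same route as the paper: both obtain $\tilde D_X Z = -\phi\,\tilde D_X(\phi Z) + \eta(\tilde D_X Z)\nu \pm \eta(Z)\phi X$ from (\ref{eq4.5}) and (\ref{eq2.14}), expand $\tilde D_X(\phi Z)$ via (\ref{eq3.3}), (\ref{eq4.7}), (\ref{eq4.13}), apply $\phi$ using (\ref{eq3.2}) and (\ref{eq3.4}), and then take the tangential part. You are in fact more careful than the paper, which in its displayed expansion writes only $C\tilde h^l(X,TZ)$ and $C\tilde D^l(X,wZ)$ and thereby silently assumes $B$ vanishes on $ltr(TN)$; you correctly isolate this point and trace it to the $\phi$-invariance of $ltr(TN)$.
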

\begin{proof} For $X \in \Gamma(E_{\circ})$, $Z \in \Gamma(E^{\prime})$
\begin{eqnarray}
\tilde D_X Z  &=& -T D_XT Z -w D_XT Z -C\tilde h^l(X,T Z) - B\tilde h^s(X,T Z) - C\tilde h^s(X,T Z)\nonumber\\ &&+ T\tilde A_{wZ}X +wA_{wZ}X - B\tilde\nabla^s_XwZ - C\tilde\nabla^s_XwZ - C\tilde D^l(X,wZ) \nonumber\\ &&+ \eta(Z)\phi X + \eta(D_XZ)\nu  \nonumber
\end{eqnarray}
from equations (\ref{eq2.14}), (\ref{eq4.5}), (\ref{eq3.3}) and (\ref{eq3.4}).
\end{proof}
By taking tangential part, we get the assertion.
\vspace{0.1in}
\section{Concluding Remarks}

The investigation of statistical manifolds has gained considerable attention due to their diverse applications across various domains such as statistical inference, neural networks, image processing, clustering, etc. However, despite these advancements the integration of lightlike geometry with statistical and contact structures remains relatively underdeveloped.Thus we have conducted a detailed analysis of the structural characteristics of SGL submanifolds within the framework of an indefinite Sasakian statistical manifold and provides novel perspectives on the relationship between statistical geometry and indefinite contact structures.The findings presented in this work open new avenues for extending similar analyses to a broader class of geometric settings, particularly within complex and contact metric manifolds endowed with specialized connections. Such extensions may lead to deeper understanding and novel applications in differential geometry, theoretical physics, and data-driven learning models.

\end{document}